\documentclass[10pt,reqno]{amsart}

\usepackage[utf8]{inputenc}
\usepackage{comment}
\usepackage{amsmath}
\usepackage{mathtools}
\usepackage{graphicx}
\usepackage{amsfonts}
\usepackage{amsthm}
\usepackage{amssymb}
\usepackage{mathrsfs}	
\numberwithin{equation}{section}
\usepackage[all]{xy}
\usepackage{color}
\usepackage[left=3cm,right=3cm]{geometry}
\usepackage{bm} %bold mathcal

%%%%%%%%%%%%% Coloured Comments %%%%%%%%%%%
\usepackage{todonotes}

%%%%%%%%%%%%%%%%%%%%%%%%%%%%%%%%%%%%%%%%%%%

\DeclareMathOperator{\cc}{c}
\DeclareMathOperator{\ch}{ch}
\DeclareMathOperator{\Char}{Char}
\DeclareMathOperator{\dd}{\mathsf{d}}

\DeclareMathOperator{\GL}{GL}
\DeclareMathOperator{\gl}{{\mathfrak {gl}}}
\DeclareMathOperator{\Gr}{Gr}
\DeclareMathOperator{\oH}{H}

\DeclareMathOperator{\id}{id}
\DeclareMathOperator{\Log}{L}

\DeclareMathOperator{\rk}{rk}

\DeclareMathOperator{\ssl}{{\mathfrak sl}}
\DeclareMathOperator{\Sym}{Sym}
\DeclareMathOperator{\Rep}{Rep}
\DeclareMathOperator{\Tr}{Tr}
\newcommand{\ue}{\mathfrak{U}}

\newcommand{\mO}{\mathcal{O}}

\newcommand{\sS}{\mathsf{\mathbf{S}}}

\newcommand{\W}{\bigwedge}

\newcommand{\K}{\mathbb{C}}

\newcommand{\Nset}{\mathbb{N}}
\newcommand{\Qset}{\mathbb{Q}}
\newcommand{\Zset}{\mathbb{Z}}

\newtheorem{lemma}{Lemma}[section]
\newtheorem{proposition}[lemma]{Proposition}
\newtheorem{thm}[lemma]{Theorem}
\newtheorem{theorem}{Theorem}

\newtheorem{definition}[lemma]{Definition}
\newtheorem{question}[lemma]{Question}
\newtheorem*{questionno}{Question}

\setcounter{MaxMatrixCols}{200}  

\theoremstyle{remark}
\newtheorem{example}[lemma]{Example}
\newtheorem{remark}[lemma]{Remark}

\newtheorem*{notation*}{Notation}

\title{Higher discriminants of vector bundles and Schur functors}

\author{Alessandro D'Andrea}
%\address{\newline Alma Mater studiorum Universit\`a di Bologna\hfill\newline Dipartimento di Matematica\hfill\newline Piazza di porta San Donato 5, 40126 Bologna, Italy}

\author{Enrico Fatighenti}
%\address{\newline Alma Mater studiorum Universit\`a di Bologna\hfill\newline Dipartimento di Matematica\hfill\newline Piazza di porta San Donato 5, 40126 Bologna, Italy}

\author{Claudio Onorati}
\address{\newline Alma Mater studiorum Universit\`a di Bologna\hfill\newline Dipartimento di Matematica\hfill\newline Piazza di Porta San Donato 5, 40126 Bologna, Italy} \email[A.~D'Andrea]{a.dandrea@unibo.it} \email[E.~Fatighenti]{enrico.fatighenti@unibo.it} \email[C.~Onorati]{claudio.onorati@unibo.it}
\date{}

\setcounter{tocdepth}{1}

\begin{document}

\begin{abstract}
    We prove some closed formulas for the logarithmic Chern character of a locally free sheaf. The argument used is representation-theoretic and we connect these formulas with the actions of some Casimir elements of $\ssl_r$. 

    As an application, we give a recipe to construct slope polystable modular bundles on hyper-K\"ahler manifolds from old ones.
\end{abstract}

\maketitle

\tableofcontents

%%%%%%%%%%%%%%%%%%%%%%%%%%%%%%%%%%%%%%%
%%%%%%%%%%%%%%%%%%%%%%%%%%%%%%%%%%%%%%%
\section*{Introduction}

If $X$ is a compact K\"ahler manifold and $E$ is a holomorphic vector bundle on $X$, it is in general an important and difficult problem the computation of the Chern classes $\cc_i(E)\in\oH^{2i}(X,\Zset)$. 

In a similar way, if $E$ is a vector bundle obtained from another vector bundle $E'$ via tensor operations, it is important and difficult to write Chern classes of $E$ in terms of Chern classes of $E'$. Examples are given by symmetric and exterior powers: in \cite{Dragutin} the author give a combinatorial expression of the Chern character of these operations. 
Extracting general formulas can be quite difficult even in the case of tensor product of two bundles (see \cite{Man16}).
\medskip

Our main result is the following explicit formula for the first three \emph{logarithmic Chern characters} of the Schur functor a vector bundle $E$. As a consequence of this result, we have explicit formulas for the first three Chern characters of Schur functors (see Appendix~\ref{appendix:Schur}). 

Let us recall the setting. If $E$ is a vector bundle of rank $r$, then we set
\begin{align*} 
    \Delta_1(E)= & \cc_1(E) \\
    \Delta_2(E)= & \cc_1(E)^2-2r\ch_2(E) \\
    \Delta_3(E)= & \cc_1(E)^3-3r\cc_1(E)\ch_2(E)+3r^2\ch_3(E)\,.
\end{align*}
These classes arise naturally from the logarithm expansion of the Chern character (see Section~\ref{section:log}). 

\begin{theorem}[Theorem~\ref{thm:fibrati}]\label{Theorem A}
    Let $X$ be a compact K\"ahler manifold or a proper smooth complex variety, and let $E$ be a vector bundle of rank $r$ on $X$. Denote by $\sS^\alpha E$ the Schur functor associated to a partition $\alpha=(\alpha_1,\dots,\alpha_r)$, and denote by $r_\alpha$ its rank. Then
    \begin{align*}
        \Delta_1(\sS^\alpha E) = & \, |\alpha|\frac{r_\alpha}{r} \Delta_1(E) \\
        \Delta_2(\sS^\alpha E) = & \, \frac{\dot\delta_r^{(2)}(\alpha)}{(r-1) (r+1)} \left(\frac{r_\alpha}{r}\right)^2 \Delta_2(E) \\
        \Delta_3(\sS^\alpha E) = & \, \frac{\dot\delta_r^{(3)}(\alpha)}{(r-2) (r-1) (r+1) (r+2)} \left(\frac{r_\alpha}{r}\right)^3 \Delta_2(E)\,,
    \end{align*}
    where $\dot\delta_r^{(2)}(\alpha)$ and $\dot\delta_r^{(3)}(\alpha)$ are polynomials in $\alpha=(\alpha_1, \ldots, \alpha_r)$ as introduced in Theorem~\ref{thm:casimiri}.
\end{theorem}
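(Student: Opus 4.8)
The plan is to translate the three identities into statements about traces of powers of a single matrix in the representation $\sS^\alpha\K^r$, and then exploit the fact that $\ssl_r$ has essentially no low-degree invariants. First I would invoke the splitting principle and assume $E=L_1\oplus\cdots\oplus L_r$, with Chern roots $x_i=\cc_1(L_i)$ and $\ch(E)=\sum_i e^{x_i}$. Writing $X=\mathrm{diag}(x_1,\dots,x_r)\in\gl_r$, the Chern roots of $\sS^\alpha E$ are the weights $\langle\lambda,x\rangle$ of $\sS^\alpha\K^r$ counted with multiplicity, i.e.\ the eigenvalues of $\dd\sS^\alpha(X)$, where $\dd\sS^\alpha$ denotes the derived representation of $\gl_r$ on $\sS^\alpha\K^r$. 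Hence every power sum of the Chern roots of $\sS^\alpha E$ is a trace, $\sum_\lambda\langle\lambda,x\rangle^k=\Tr_{\sS^\alpha\K^r}\bigl(\dd\sS^\alpha(X)^k\bigr)$, while $\sum_i x_i^k=\Tr_{\K^r}(X^k)$.

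Next I would record that, for $k=2,3$, the class $\Delta_k$ is a centred power sum up to a rank-dependent scalar. On the split bundle, expanding the definitions with $\bar x=\cc_1(E)/r$ yields $\Delta_2(E)=-r\sum_i(x_i-\bar x)^2$ and $\Delta_3(E)=\tfrac{r^2}{2}\sum_i(x_i-\bar x)^3$; conceptually this holds because the logarithmic expansion defining the $\Delta_k$ computes the cumulants of the Chern roots, and cumulants agree with centred moments exactly in degrees two and three. The centring amounts to passing to the traceless part $Y=X-\bar x\,\mathrm{id}\in\ssl_r$: since $\dd\sS^\alpha(\mathrm{id})=|\alpha|\,\mathrm{id}$, the eigenvalues of $\dd\sS^\alpha(Y)$ are precisely the centred Chern roots of $\sS^\alpha E$, and their mean vanishes because $\sum_\lambda\lambda=\tfrac{|\alpha|r_\alpha}{r}(1,\dots,1)$. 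Thus $\Delta_k(\sS^\alpha E)=(-1)^{k-1}\tfrac{r_\alpha^{k-1}}{(k-1)!}\Tr_{\sS^\alpha\K^r}\bigl(\dd\sS^\alpha(Y)^k\bigr)$ for $k=2,3$, and the same identity with $r$ in place of $r_\alpha$ holds for $E$ itself.

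The representation-theoretic heart is then the observation that $Y\mapsto\Tr_{\sS^\alpha\K^r}\bigl(\dd\sS^\alpha(Y)^k\bigr)$ is an $\mathrm{Ad}$-invariant homogeneous polynomial of degree $k$ on $\ssl_r$. Since the invariant ring is freely generated by $\Tr(Y^2),\dots,\Tr(Y^r)$, this space is one-dimensional in degree $2$ and in degree $3$ (the latter for $r\ge3$), spanned by $\Tr_{\K^r}(Y^k)$. Consequently there exist scalars $\kappa_k(\alpha)$ with $\Tr_{\sS^\alpha\K^r}\bigl(\dd\sS^\alpha(Y)^k\bigr)=\kappa_k(\alpha)\Tr_{\K^r}(Y^k)$, and dividing the two identities of the previous step gives $\Delta_k(\sS^\alpha E)=\bigl(\tfrac{r_\alpha}{r}\bigr)^{k-1}\kappa_k(\alpha)\,\Delta_k(E)$ for $k=2,3$. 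The degree-one case is separate and immediate, as $\Delta_1=\cc_1$ is additive and $\cc_1(\sS^\alpha E)=\sum_\lambda\langle\lambda,x\rangle=|\alpha|\tfrac{r_\alpha}{r}\cc_1(E)$. I would then insert the explicit value of the trace ratio $\kappa_k(\alpha)$, which is a higher Casimir eigenvalue of the $\gl_r$-module $\sS^\alpha\K^r$ and equals $\tfrac{r_\alpha}{r}\,\dot\delta_r^{(k)}(\alpha)\big/\prod_{j=1}^{k-1}(r^2-j^2)$ by Theorem~\ref{thm:casimiri}, to recover the displayed formulas (the denominators $(r-1)(r+1)$ and $(r-2)(r-1)(r+1)(r+2)$ being exactly $\prod_{j=1}^{k-1}(r^2-j^2)$ for $k=2,3$).

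The hard part will be the explicit determination of the scalars $\kappa_k(\alpha)$ as polynomials in the parts of $\alpha$: the proportionality to $\Tr_{\K^r}(Y^k)$ is free from invariant theory, but pinning down the constant is the genuine content of Theorem~\ref{thm:casimiri}, which evaluates the higher Casimir eigenvalues (via a Harish-Chandra/Perelomov--Popov type computation on the highest weight). A secondary subtlety is the low-rank degeneration: for $r=2$ the cubic invariant $\Tr(Y^3)$ vanishes identically, consistently with the factor $(r-2)$ in the denominator, so the $\Delta_3$ identity must there be read in the appropriate limiting sense.
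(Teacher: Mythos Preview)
Your proposal is correct, and it reaches the result by a route genuinely different from the paper's.

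Both arguments open with the splitting principle and close by invoking Theorem~\ref{thm:casimiri} for the explicit constants. The divergence is in how the proportionality $\Delta_k(\sS^\alpha E)=c_k(\alpha)\,\Delta_k(E)$ is obtained for $k=2,3$. The paper works inside the graded Grothendieck ring $\Rep_\Qset$, first proving a \emph{weak additivity} of the operators $\dd_k$ on each piece $\Rep_\Qset^d$ (Proposition~\ref{prop:weak additivity}), and then using log-multiplicativity together with the power-sum generators $P_\alpha$ to deduce $\dd_k(U)=f_U^{(k)}(r)\,\dd_k(V)$ (Theorem~\ref{thm:Delta 2 e 3}). You instead observe directly that $\Delta_k$ is, up to a rank-dependent scalar, the $k$th centred power sum of the Chern roots (the cumulant/centred-moment coincidence in degrees $2$ and $3$), and then use that the space of $\mathrm{Ad}$-invariant degree-$k$ polynomials on $\ssl_r$ is one-dimensional for $k=2,3$, being spanned by $\Tr(Y^k)$.

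Your approach is more economical and makes the special role of $k\le 3$ transparent twice over: once in the cumulant identity and once in the structure of the invariant ring $\K[\Tr(Y^2),\dots,\Tr(Y^r)]$. It also links your ratio $\kappa_k(\alpha)$ directly to a Casimir eigenvalue, so strictly speaking you would need only the Harish--Chandra computation behind Proposition~\ref{prop:ultima} rather than the full identification $f^{(k)}=\phi^{(k)}$ of Proposition~\ref{prop:f = phi}. The paper's approach, by contrast, builds the weak-additivity machinery and the graded viewpoint on $\Rep_\Qset$, which it then reuses both in matching the proportionality constants to Casimir traces and in its discussion of what fails for higher discriminants. Your remark on the $r=2$ degeneration is exactly right: $\Tr(Y^3)$ vanishes identically on $\ssl_2$, so both sides of the $\Delta_3$ identity are zero there, consistently with the factor $r-2$ in the denominator.
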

To the best of our knowledge, this is the first time explicit formulas for Chern classes of Schur functors are exhibited in this generality. Notice also that from the classes $\Delta_1(\sS^\alpha E)$, $\Delta_2(\sS^\alpha E)$ and $\Delta_3(\sS^\alpha E)$ one can recursively also find the classes $c_1(\sS^\alpha E)$, $\ch_2(\sS^\alpha E)$ and $\ch_3(\sS^\alpha E)$. For book-keeping reasons, we collect in Section~\ref{appendix:Schur} these expressions.

The polynomials $\dot\delta_r^{(2)}(\alpha)$ and $\dot\delta_r^{(3)}(\alpha)$ are explicit. In fact we have a purely algebraic interpretation of $\dot\delta_r^{(k)}$ as characters of two explicit Casimir elements of $\ssl_r$ (see Section~\ref{section:Casimir actions}). 

The proof of Theorem~\ref{Theorem A} is representation-theoretic. Using the splitting principle, we can reduce to work formally with a complex vector space $V$ of dimension $r$ and its Schur modules $\sS^\alpha V$, i.e.\ the corresponding irreducible polynomial representations of $\GL(V)$. We review in Section~\ref{section:polynomial reps} some basic definitions and facts about polynomial representations. 
The two main results in this setting are Theorem~\ref{thm:Delta 2 e 3} and Theorem~\ref{thm:casimiri}, proved respectively in Section~\ref{section:discriminants} and Section~\ref{section:Casimir actions}. The combination of the two gives Theorem~\ref{Theorem A}.

Extending these results to degrees higher than $3$ seems to be difficult. As we point out in Remark~\ref{rmk:più di tre non si va}, one can hope to achieve such a generalisation by rigidifying the situation, i.e.\ fixing some numerical invariants. In Section~\ref{section:higher discriminants?} we discuss a bit more about this higher degree situation, leaving two open questions.
\medskip

Beside being an interesting computational and combinatorial problem, there are other geometric motivations for considering (higher) discriminants. In fact we wish to point out that they are strictly connected to the notion of stability. For example, the class $\Delta_2$ is the usual discriminant, and the famous Bogomolov Theorem states that if a vector bundle $E$ is $\omega$-polystable, then $\Delta_2(E).\omega^{\dim X-2}\geq0$. (Here $\omega$ is a K\"ahler class.) Other connections can be found in \cite{DrezetLog,Ful20}.

From our point of view, another important motivation to consider this problem comes from the geometry of compact hyper-K\"ahler manifolds. Recently O'Grady have introduced the notion of \emph{modular} vector bundles on compact hyper-K\"ahler manifolds as an attempt to generalise to higher dimensions the theory of moduli spaces of bundles on K3 surfaces (see \cite{O'Grady:Modular}). In this context, a vector bundle $E$ is called modular if $\Delta_2(E)$ belongs to a line in $\oH^4(X,\Qset)$ intrinsically attached to $X$ (see Definition~\ref{defn:modular}).

Because of the numerical nature of this definition, it seems to be particularly difficult to construct examples of modular vector bundles. Recent works in this direction include \cite{Bottini,Fa24, FO24, FT25, O'Grady:Modular with many moduli}.

A straightforward corollary of Theorem~\ref{Theorem A} yields the following result, which allows us to construct infinitely many polystable modular bundles starting from old ones.

\begin{theorem}[Theorem~\ref{thm:modular}]
    Let $X$ be a compact hyper-K\"ahler manifold. If $E$ is a slope polystable modular vector bundle, then for every $\alpha$ the Schur functor $\sS^\alpha E$ is a slope polystable modular vector bundle.
\end{theorem}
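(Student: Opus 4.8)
The plan is to derive this as a direct corollary of Theorem~A (Theorem~\ref{thm:fibrati}) combined with a preservation-of-slope-polystability statement for Schur functors. The proof splits naturally into two independent pieces: first, showing that $\sS^\alpha E$ is \emph{modular} whenever $E$ is; second, showing that $\sS^\alpha E$ is \emph{slope polystable} whenever $E$ is.

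\begin{proof}[Proof sketch]
The modularity part follows immediately from Theorem~\ref{Theorem A}. By Definition~\ref{defn:modular}, a vector bundle $E$ is modular precisely when $\Delta_2(E)$ lies in a distinguished line $\ell_X\subseteq\oH^4(X,\Qset)$ attached to $X$. The second displayed formula in Theorem~\ref{Theorem A} gives
\[
    \Delta_2(\sS^\alpha E) = \frac{\dot\delta_r^{(2)}(\alpha)}{(r-1)(r+1)} \left(\frac{r_\alpha}{r}\right)^2 \Delta_2(E),
\]
so $\Delta_2(\sS^\alpha E)$ is a rational scalar multiple of $\Delta_2(E)$. Since $\Delta_2(E)\in\ell_X$ and $\ell_X$ is a line (hence closed under scaling), we conclude $\Delta_2(\sS^\alpha E)\in\ell_X$, i.e.\ $\sS^\alpha E$ is modular. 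This step is essentially formal once Theorem~A is in hand.

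The slope-polystability part is the one requiring genuine input, and I expect it to be the main obstacle. The strategy is to use the Hitchin--Kobayashi correspondence: on a compact K\"ahler (in particular hyper-K\"ahler) manifold, a vector bundle is slope polystable if and only if it admits a Hermite--Einstein metric with respect to the chosen K\"ahler class $\omega$. If $E$ is slope polystable, equip it with a Hermite--Einstein metric $h$. The key functorial fact is that Schur functors are built from tensor powers by taking the image of a Young symmetrizer (equivalently, a direct summand of $E^{\otimes|\alpha|}$ cut out by an idempotent in the group algebra of the symmetric group). A Hermite--Einstein metric $h$ on $E$ induces a natural Hermite--Einstein metric on $E^{\otimes|\alpha|}$, and since the Young symmetrizer is a parallel (indeed $\GL$-equivariant) projector, the induced metric restricts to a Hermite--Einstein metric on the subbundle $\sS^\alpha E$. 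One must check the Einstein factor transforms correctly, but this is standard. Hence $\sS^\alpha E$ admits a Hermite--Einstein metric and is therefore slope polystable.

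Alternatively, and perhaps more in the spirit of this paper, one can argue purely algebraically: slope semistability is preserved under tensor operations over a field of characteristic zero, and Schur functors of a polystable bundle decompose into a direct sum of stable bundles all of the same slope. The delicate point either way is bookkeeping the slopes: one verifies $\mu(\sS^\alpha E) = |\alpha|\,\mu(E)$, which is consistent with the first formula $\Delta_1(\sS^\alpha E)=|\alpha|\frac{r_\alpha}{r}\Delta_1(E)$ since $\mu=\cc_1/\rk$ and $\rk\sS^\alpha E=r_\alpha$. Combining the modularity and polystability conclusions yields the theorem. The main technical care is ensuring that the polystability (as opposed to mere semistability) genuinely survives the Schur construction; this is where invoking the Hermite--Einstein characterization, which behaves well under direct summands, is cleanest.
\end{proof}
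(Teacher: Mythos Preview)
Your proposal is correct and matches the paper's proof: modularity follows immediately from Theorem~\ref{thm:fibrati} since $\Delta_2(\sS^\alpha E)$ is a scalar multiple of $\Delta_2(E)$, and slope polystability is obtained via the Donaldson--Uhlenbeck--Yau / Hitchin--Kobayashi correspondence by noting that $\sS^\alpha E$ is a direct summand of $E^{\otimes |\alpha|}$ (this is the paper's Proposition~\ref{prop:poly implica poly}). The only minor discrepancy is that the paper's formal Definition~\ref{defn:modular} speaks of the \emph{projection} of $\Delta_2(E)$ onto the Verbitsky component $\operatorname{SH}^4(X)$ being a multiple of $\mathsf{q}_X$, but since projection commutes with scalar multiplication your argument goes through unchanged.
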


Even if $E$ is slope stable, in general $\sS^\alpha E$ would not be stable (cf.\ Remark~\ref{remark:not stable}). 
\medskip

We conclude the article with two appendices. In the first one, Appendix~\ref{appendix:low ranks}, we comment on an interesting property of discriminants that stops to work again from degree $4$ on-wards. This gives us the opportunity to explore higher discriminants from a different point of view. In Appendix~\ref{appendix} we list useful formulas for the Chern character (in degree up to three) of symmetric and exterior powers bundles, and for Schur bundles, which are obtained as a consequence of Theorem~\ref{Theorem A}. 

%%%%%%%%%%%%%%%%%%%%%%%%%%%%%%%%
\subsection*{Acknowledgments}
This research has been partially funded by the European Union - NextGenerationEU under the
National Recovery and Resilience Plan (PNRR) - Mission 4 Education and research - Component 2
From research to business - Investment 1.1 Notice Prin 2022 - DD N. 104 del 2/2/2022, from title
“Symplectic varieties: their interplay with Fano manifolds and derived categories”, proposal code
2022PEKYBJ – CUP J53D23003840006.
The second and third authors are members of the INDAM-GNSAGA group.

%%%%%%%%%%%%%%%%%%%%%%%%%%%%%%%%%%%%%%
%%%%%%%%%%%%%%%%%%%%%%%%%%%%%%%%%%%%%%
\section{Review of polynomial representations of $\GL_r$}\label{section:polynomial reps}

Let $V$ be a complex vector space of dimension $r$ and $\GL_r=\GL(V)$ the corresponding algebraic group. 
\medskip

Choosing a basis of $V$ provides a choice of a maximal sub-torus $T \subset \GL_r$ of diagonal matrices. Every polynomial group homomorphism $T \to \K^*$ is of the form
$$ \lambda\colon \operatorname{diag}(t_1, \dots, t_r) \mapsto t_1^{\lambda_1} \dots t_r^{\lambda_r}, $$
so that the set $\widehat T$ of all polynomial group homomorphisms $T \to \K^*$ can be identified with $\Nset^r$ via $\lambda \mapsto (\lambda_1, \dots, \lambda_r)$.
Restricting the action of $\GL_r$ on a finite-dimensional polynomial representation to the torus $T$ provides
a direct sum decomposition
$$W = \bigoplus_{\lambda \in \widehat{T}} W(\lambda),$$
where $W(\lambda) = \{v \in W \mid t.v = \lambda(t) v\}$ is the {\em weight space} corresponding to $\lambda$. 
\medskip

Polynomial actions of $\GL_r$ are completely reducible and all irreducible polynomial representations, called {\em Schur representations}, arise as direct summands of some $V^{\otimes d}$. 
Recall that Schur representations are indexed by partitions $\alpha=(\alpha_1,\dots,\alpha_r)$ and denoted by $\sS^\alpha V$. The size of $\alpha$ is $|\alpha|=\alpha_1+\dots+\alpha_r$ and if $\sS^\alpha V\subset V^{\otimes d}$, then $|\alpha|=d$ and we will say that $\sS^\alpha V$ is a Schur representation of degree $d$.
\medskip

With each polynomial representation $W$ of $\GL_r$, one may associate its character
$$\Char(W) = \sum_{\lambda\in \Nset^r} \dim W(\lambda)\, q^\lambda \in \Zset[q_1, \dots, q_r],$$
where $q^\lambda = q_1^{\lambda_1} \cdots q_r^{\lambda r}$. 
It is immediate to see that
$$\Char(W_1 \oplus W_2) = \Char(W_1) + \Char(W_2) \quad \mbox{and} \quad \Char(W_1 \otimes W_2) = \Char(W_1) \cdot \Char(W_2).$$

The character of a polynomial representation is always a symmetric polynomial in $q_1, \dots, q_r$, and the character of a Schur representation of degree $d$ is homogeneous of degree $d$. 

\begin{example}
    Among Schur functors we have the symmetric powers $S^k V$ and the exterior powers $\W^k V$. This defines three famous sets of symmetric polynomials:
    \begin{itemize}
        \item elementary symmetric polynomials $\sigma_k=\Char(\W^k V)$;
        \item complete symmetric polynomials $h_k=\Char(S^k V)$;
        \item Schur polynomials $s_\alpha=\Char(\sS^\alpha V)$.
    \end{itemize}
\end{example}

Finally, let us also recall that every action of $\GL_r$ induces corresponding representations of the Lie algebra $\gl_r$ and of its universal enveloping algebra $\ue(\gl_r)$, which has a canonical cocommutative Hopf algebra structure where elements of $\gl_r$ are all primitive. This will be used in Section~\ref{section:Casimir actions} to study some special characters.

%%%%%%%%%%%%%%%%%%%%%%%%%%%%%%%%%%
\subsection{The Grothendieck group of representations}

Let $\Rep=\Rep(\GL_r)$ be the Grothendieck ring of polynomial representations of $\GL_r$, i.e.\ the abelian group of formal $\Zset$-linear combinations of isomorphism classes of finite-dimensional polynomial representations of $\GL_r$ endowed with the unique multiplication extending tensor product of representations. We also consider the $\Qset$-linear extension of $\Rep$, i.e.\ the $\Qset$-algebra $\Rep_\Qset = \Rep \otimes_\Zset \Qset$. 
(Non-zero) Schur representations provide a free basis of both algebras.
\medskip

One may extend $\Qset$-linearly $\Char$ to a $\Qset$-algebra homomorphism
\begin{equation}\label{eqn:Char}
\Char\colon \Rep_\Qset \to \Qset[q_1, \dots, q_r]
\end{equation}
which indeed defines an isomorphism between $\Rep_\Qset$ and the sub-algebra of symmetric polynomials. 

Moreover $\Char(U)$ is a homogeneous polynomial of degree $d$ if and only if $U$ is a $\Qset$-linear combination of Schur representations of degree $d$. We thus endow $\Rep_\Qset$ with the grading corresponding to the degree, i.e.\
$$\Rep_\Qset = \bigoplus_{d\geq 0} \Rep_\Qset^d, $$
thus making $\Char$ into a homogeneous homomorphism.
\medskip

It is a standard fact that elementary symmetric polynomials $\sigma_1, \dots, \sigma_r$ are $\Zset$-algebra generators of the ring $\Zset[q_1, \dots, q_r]^{\mathfrak{S}_r}$ of symmetric polynomials with integral coefficients, so that wedge powers $\wedge^k V$ for $k = 1, \dots, r$ generate $\Rep$ over $\Zset$ (hence also $\Rep_\Qset$ over $\Qset$). The same holds for symmetric powers $S^k V$ for $k = 1, \dots, r$, as  complete homogeneous symmetric polynomials $h_1, \dots, h_r$ are also known to generate $\Zset[q_1, \dots, q_r]^{\mathfrak{S}_r}$. In particular, the symmetric power representations $S^1 V,\dots,S^r V$ are $\Zset$-algebra generators of $\Rep$, and similarly $\Qset$-algebra generators of $\Rep_\Qset$.

\begin{remark}
Each $S^k V$ is a representation of degree $k$. The homogeneous component $\Rep^d$ is generated by those monomials of symmetric power representations which have degree $d$.
To introduce a notation we will use later, with any $\alpha=(\alpha_1,\dots,\alpha_t)\in \Nset^t$ we associate the representation $\Sym^\alpha V:=S^{\alpha_1} V\cdots S^{\alpha_t}V\in \Rep$. Then $\Rep^d$ is $\Zset$-linearly generated by all $\Sym^{\alpha}V$ such that $|\alpha|=d$.
\end{remark}
 
\begin{remark}\label{rmk:P k}
Power sums polynomials 
\begin{equation}\label{eqn:p k}
p_k = q_1^k + \dots + q_r^k, 
\end{equation}
for $k = 1, \dots, r$, do not generate $\Zset[\sigma_1, \dots, \sigma_r]$ but are $\Qset$-algebra generators of $\Qset[\sigma_1, \dots, \sigma_r]$. If we define $P_k:=\Char^{-1}(p_k)\in\Rep_\Qset$, then the $P_k$, for $k=1,\dots,r$ generate $\Rep_\Qset$ as a $\Qset$-algebra. As before, if we set $P_\alpha := P_{\alpha_1} \cdot \ldots \cdot P_{\alpha_t}$ then $\Rep^d$ is $\Qset$-linearly generated by all $P_\alpha$ such that $|\alpha| = d$.

For instance, $P_1=V$ and $P_2 = S^2 V - \wedge^2 V$. In general $P_k$ is not an honest representation of $\GL_r$: it is known that $P_n$ is the alternating sum of all Schur {\em hook representations}\footnote{i.e. those corresponding to a hook partition $(k, 1, 1, \dots, 1, 0, \dots, 0)$.} of degree $n$, but we will not need this fact. Notice that $P_0$ equals $r$ times the trivial one-dimensional representation.
\end{remark}

%%%%%%%%%%%%%%%%%%%%%%%%%%%%%%%%%%%%%%
\subsection{Chern character of a representation}

In analogy with bundles, we define now the Chern character $\ch(W)$ of a polynomial representation $W$.

First of all, denote by
\[ e\colon \Qset[q_1,\dots,q_r]\longrightarrow\Qset[[a_1,\dots,a_r]] \]
the evaluation homomorphism obtained by setting $q_i\mapsto e^{a_i}\in \Qset[[a_i]] \subset \Qset[[a_1, \dots, a_r]]$. The indeterminates $a_1,\dots,a_r$ are called \emph{Chern roots} (of $V$).

\begin{definition}\label{defn:ch for rep}
    The \emph{Chern character} morphism is
    \[ \ch:=e\circ\Char\colon  \Rep_\Qset \to \Qset[[a_1, \dots, a_r]]. \]
\end{definition}

Given $U\in\Rep_\Qset$, we group terms in $\ch(U)$ by degree, so that
$$\ch(U) = \sum_{k\geq 0} \ch_k(U),$$
where $\ch_k(U)$ is a homogeneous symmetric polynomial in $a_1, \dots, a_r$ of degree $k$. 

\begin{example}\label{example:homogeneity}
Let $P_d$ be the elements defined in Remark \ref{rmk:P k}. By definition, we have 
\[ \ch_k(P_d) = \frac{(d a_1)^k + \dots + (d a_r)^k}{k!}= d^k \frac{p_k(a_1, \dots, a_r)}{k!} . \] 
Notice that in particular $\ch_0(P_d) = r$ for every $d\geq 1$.
As a consequence, 
\begin{equation} 
\ch_k(P_d) = d^k \ch_k(P_1). 
\end{equation}
More in general, if $\mathsf{c} = \ch_{k_1} \dots \ch_{k_s}$, then
$$\mathsf{c} (P_d) = d^{k_1 + \dots + k_s} \mathsf{c}(P_1).$$
\end{example}

\begin{example}[Svrtan]\label{example:Svrtan}
In \cite{Dragutin}, closed formulas are given for $\ch(S^m V)$ and $\ch(\W^m V)$ (see Sections~\ref{appendix:symmetric} and~\ref{appendix:exterior}). The statements of such formulas are in terms of Chern characters of vector bundles, but the proofs are purely combinatorial and based on the splitting principle, so that they are indeed set in this algebraic framework.

As an example, let us state here the formula for the symmetric power (\cite[Theorem~4.8]{Dragutin}):
\begin{equation}\label{eqn:ch of Sym}
\ch(S^m V)=\sum_\alpha\frac{1}{\lVert\alpha\rVert}\sum_{\beta\leq\alpha}\binom{m+r-1}{m-|\beta|}\overleftarrow{\beta}!S(\alpha,\beta)\ch_\alpha(V). 
\end{equation}
The first summation is taken over all partitions $\alpha=(\alpha_1,\dots,\alpha_t)$ such that $\alpha_i\neq0$ for every $i=1,\dots,t$, and $\ch_\alpha(\tilde{V})$ stands for the product $\prod_i\ch_{\alpha_i}(V)$. The norm $\lVert\alpha\rVert$ is the product $a_1!a_2!\cdots a_t!$, where $a_i=\sharp\{j\mid \alpha_j=i\}$. The second summation is taken over all $\beta\in\Nset^s$ such that $\beta\leq\alpha$ and $\ell(\beta)=\ell(\alpha)$ (in particular also $\beta_i\neq0$ for every $i$). The symbol $\overleftarrow{\beta}$ stands for the partition $(\beta_1-1,\dots,\beta_t-1)$ and $\overleftarrow{\beta}!=\prod_i(\beta_i-1)!$.  
Finally, $S(\alpha,\beta)=\prod_iS(\alpha_i,\beta_i)$, where $S(x,y)$ is the Stirling number of the second kind.
\end{example}

\begin{remark}\label{rmk:ch_0=rk}
    If $U$ is a representation of $\GL_r$, then 
    \[ \ch_0(U)=\dim U. \]
    The same follows when $U\in\Rep_\Qset$ (alternatively one can define the dimension of $U$ in this way).
\end{remark}

Finally, let us remark that for any $U_1,U_2\in\Rep_\Qset$ we have
\begin{equation}\label{eqn:ch somma e prodotto}
    \ch(U_1 + U_2)=\ch(U_1)+\ch(U_2) \qquad\mbox{and}\qquad \ch(U_1\cdot U_2)=\ch(U_1)\ch(U_2).
\end{equation}

%%%%%%%%%%%%%%%%%%%%%%%%%%%%%%%%%%%%%%%%%
%%%%%%%%%%%%%%%%%%%%%%%%%%%%%%%%%%%%%%%%%
\section{Discriminants}\label{section:discriminants}

%%%%%%%%%%%%%%%%%%%%%%%%%%%%%%%%%%%%%%%%%
\subsection{Slope}\label{section:slope}

From (\ref{eqn:ch somma e prodotto}) we obtain additivity
$$\ch_1(U_1 + U_2) = \ch_1(U_1) + \ch_1(U_2)$$ and the equality
$$\ch_1(U_1 \cdot U_2) = \ch_0(U_2) \ch_1(U_1) + \ch_0(U_1) \ch_1(U_2),$$
that we call {\em log-multiplicativity} as it may be rephrased as
\begin{equation}\label{eqn:slope additive}
\frac{\ch_1}{\ch_0}(U_1\cdot U_2) = \frac{\ch_1}{\ch_0}(U_1) + \frac{\ch_1}{\ch_0}(U_2).
\end{equation}

\begin{proposition}\label{prop:slope}
If $U \in \Rep_\Qset^d$, then
$$\frac{\ch_1}{\ch_0}(U) = d \, \frac{\ch_1}{\ch_0}(V).$$
\end{proposition}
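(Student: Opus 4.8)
The plan is to exploit the fact that, although the slope $\ch_1/\ch_0$ is a ratio and hence \emph{not} additive with respect to the direct-sum structure of $\Rep_\Qset$, the assertion is equivalent to the \emph{linear} identity
\begin{equation*}
\ch_1(U) = d\,\frac{\ch_1}{\ch_0}(V)\cdot\ch_0(U),
\end{equation*}
which, since both $\ch_0$ and $\ch_1$ are $\Qset$-linear by \eqref{eqn:ch somma e prodotto}, only needs to be verified on a set of $\Qset$-linear generators of $\Rep_\Qset^d$. This reformulation is the crux of the whole argument.

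By Remark~\ref{rmk:P k}, the homogeneous component $\Rep_\Qset^d$ is spanned over $\Qset$ by the products $P_\alpha = P_{\alpha_1}\cdots P_{\alpha_t}$ with $|\alpha| = d$, so I would first compute the slope of a single such generator. Applying log-multiplicativity \eqref{eqn:slope additive} repeatedly gives
\begin{equation*}
\frac{\ch_1}{\ch_0}(P_\alpha) = \sum_{i=1}^{t} \frac{\ch_1}{\ch_0}(P_{\alpha_i}).
\end{equation*}
Now Example~\ref{example:homogeneity} supplies $\ch_1(P_{\alpha_i}) = \alpha_i\,\ch_1(P_1) = \alpha_i\,\ch_1(V)$ and $\ch_0(P_{\alpha_i}) = r = \ch_0(V)$ (the latter also by Remark~\ref{rmk:ch_0=rk}), so each summand equals $\alpha_i\,\frac{\ch_1}{\ch_0}(V)$ and the total is $|\alpha|\,\frac{\ch_1}{\ch_0}(V) = d\,\frac{\ch_1}{\ch_0}(V)$.

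This shows that \emph{every} generator $P_\alpha$ carries the same slope $s := d\,\frac{\ch_1}{\ch_0}(V)$, equivalently $\ch_1(P_\alpha) = s\cdot\ch_0(P_\alpha)$ for all $\alpha$ with $|\alpha|=d$. I would then conclude by linearity: writing $U = \sum_\alpha c_\alpha P_\alpha$ with $|\alpha| = d$, linearity of $\ch_0$ and $\ch_1$ yields $\ch_1(U) = \sum_\alpha c_\alpha\,\ch_1(P_\alpha) = s\sum_\alpha c_\alpha\,\ch_0(P_\alpha) = s\,\ch_0(U)$, which is precisely the reformulated identity; dividing by $\ch_0(U)$ recovers the stated formula.

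I do not anticipate a genuine obstacle. The only conceptual point—and the reason a naive "check on generators" argument survives despite the slope being a quotient—is the passage to the equivalent linear identity in the first step. A minor caveat is that the division in the last step presupposes $\ch_0(U)\neq 0$; the linear form $\ch_1(U) = s\,\ch_0(U)$ holds unconditionally and is arguably the cleaner statement to record.
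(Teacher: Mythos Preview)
Your proof is correct and follows essentially the same route as the paper's: compute the slope of each $P_\alpha$ via log-multiplicativity and Example~\ref{example:homogeneity}, then pass to all of $\Rep_\Qset^d$ by $\Qset$-linearity. Your explicit reformulation as the linear identity $\ch_1(U) = s\cdot\ch_0(U)$ is a nice clarification of why the last step is legitimate, but the argument is otherwise the same as the paper's.
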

\begin{proof}
By Example \ref{example:homogeneity}, we already know that $\ch_1(P_d) = d \ch_1(P_1)$ and $\ch_0(P_d) = r = \ch_0(P_1)$, thus showing the claim when $U = P_d$ (recall that $P_1=V$). 

If now $\alpha=(\alpha_1,\dots,\alpha_t)$ with $|\alpha|=d$, then by equality (\ref{eqn:slope additive}) it also follows that 
\begin{equation}\label{eqn:P alpha} 
\frac{\ch_1}{\ch_0}(P_\alpha)=d\,\frac{\ch_1}{\ch_0}(P_1),
\end{equation}
where $P_\alpha=P_{\alpha_1}\cdots P_{\alpha_t}$.
The statement then follows from the fact that the $P_\alpha$'s, for $|\alpha| = d$, are $\Qset$-linear generators of $\Rep_\Qset^d$. 
\end{proof}

The ratio $\frac{\ch_1}{\ch_0}$ is known as {\em slope}, and Proposition~\ref{prop:slope} shows in particular that the slope of each Schur representation $S^\alpha V$ is $|\alpha|$ times the slope of $V$. Therefore we see that $\Rep$ is (essentially) graded by the slope.

%%%%%%%%%%%%%%%%%%%%%%%%%%%%%%%%%%%%%%
\subsection{Logarithmic Chern character}\label{section:log}

What makes Proposition~\ref{prop:slope} to work is the logarithmic equality (\ref{eqn:slope additive}). In order to extend it to higher degrees, let us then introduce the logarithmic Chern character.

\begin{definition}\label{defn:Log chern}
    For any $U\in\Rep_\Qset$, define the \emph{logarithmic Chern character} as
    \[ \Log_+(U) = \log \frac{\ch(U)}{\ch_0(U)} = \log\left(1 + \frac{\ch_1(U)}{\ch_0(U)} + \frac{\ch_2(U)}{\ch_0(U)} + \cdots\right) = \sum_{k>0} (-1)^{k+1} \frac{\Delta_k(U)}{k \ch_0(U)^k}. \]
\end{definition}
As an example, let us expand the first values of $\Delta_k$:
\begin{align}\label{eqn:Delta 1 2 3}
\begin{split}
    \Delta_1(U)= & \ch_1(U) \\
    \Delta_2(U)= & \ch_1(U)^2-2\ch_0(U)\ch_2(U) \\
    \Delta_3(U)= & \ch_1(U)^3-3\ch_0(U)\ch_1(U)\ch_2(U)+3\ch_0(U)^2\ch_3(U)\,.
\end{split}
\end{align}

The multiplicativity of $\ch$ translates into \emph{log-multiplicativity} of $\Log_+$,
\begin{equation}\label{eqn:L log-multiplicative}
    \Log_+(U_1\cdot U_2) = \Log_+(U_1) + \Log_+(U_2)\,.
\end{equation}

Before continuing towards the main result of the section, let us state another useful definition.

\begin{definition} \label{def:dk}
    For any $U\in\Rep_\Qset$, define
    \[ \dd_k = \frac{\Delta_k(U)}{k\ch_0(U)^{k-1}}. \]
\end{definition}

Equality (\ref{eqn:L log-multiplicative}) translates in 
\begin{equation}\label{eqn:log multiplicativity}
    \frac{\dd_k}{\ch_0}(U_1 \cdot U_2) = \frac{\dd_k}{\ch_0}(U_1) + \frac{\dd_k}{\ch_0}(U_2)
\end{equation}
or, equivalently, in
\begin{equation}\label{eqn:log multiplicativity dk}
\dd_k(U_1\cdot U_2)=\ch_0(U_2)\dd_k(U_1)+\ch_0(U_1)\dd_k(U_2).
\end{equation}
\smallskip

Moreover, $\dd_1(U)=\ch_1(U)$ is linear with respect to the sum operation. On the other hand, it is easy to see that in general $\dd_k$ is not additive for $k\geq2$ (see Example~\ref{example:counterexample to additivity}).

However, a \emph{weak additivity} holds for $\dd_2, \dd_3$.

\begin{proposition}\label{prop:weak additivity}
Fix $d\geq0$. If $U_1,U_2\in\Rep_\Qset^d$, then 
\[ \dd_k(U_1 + U_2)=\dd_k(U_1) + \dd_k(U_2) \]
for $k=2,3$.
\end{proposition}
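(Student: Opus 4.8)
The plan is to reduce everything to the single identity $\ch_1(U_i) = s\,\ch_0(U_i)$, valid on a fixed-degree piece, and then observe that after this substitution $\dd_2$ and $\dd_3$ become honest $\Qset$-linear combinations of the \emph{additive} functionals $\ch_0,\ch_2,\ch_3$. Conceptually, the statement is an incarnation of the fact that $\dd_k$ for $k\geq 2$ is unchanged by a (formal) twist by a line bundle, so that within a fixed degree one may "untwist to slope zero"; but the cleanest route is a direct substitution. First I would record, from \eqref{eqn:ch somma e prodotto}, that the graded components are additive: $\ch_j(U_1+U_2)=\ch_j(U_1)+\ch_j(U_2)$ for every $j\geq 0$. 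In particular $\ch_0,\ch_2,\ch_3$ are additive and $\ch_0(U_1+U_2)=\ch_0(U_1)+\ch_0(U_2)$.

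The key input is Proposition~\ref{prop:slope}. Since $U_1,U_2\in\Rep_\Qset^d$, both have slope $d$ times that of $V$; setting
\[ s := d\,\frac{\ch_1}{\ch_0}(V) = \frac{d}{r}\,\ch_1(V), \]
a fixed homogeneous degree-one element depending only on $d$ and $V$ (and not on the chosen representation), one gets $\ch_1(U_i)=s\,\ch_0(U_i)$ for $i=1,2$, and, by the additivity just recalled, also $\ch_1(U_1+U_2)=s\,\ch_0(U_1+U_2)$. The crucial feature is that \emph{the same} $s$ governs $U_1$, $U_2$ and their sum.

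Next I substitute $\ch_1=s\,\ch_0$ into the expressions \eqref{eqn:Delta 1 2 3} for $\Delta_2,\Delta_3$ and divide by the appropriate power of $\ch_0$ according to Definition~\ref{def:dk}. A routine simplification yields, for every $U\in\Rep_\Qset^d$,
\begin{align*}
\dd_2(U) &= \tfrac{s^2}{2}\,\ch_0(U) - \ch_2(U),\\
\dd_3(U) &= \tfrac{s^3}{3}\,\ch_0(U) - s\,\ch_2(U) + \ch_3(U).
\end{align*}
Each right-hand side is a $\Qset$-linear combination, with coefficients depending only on $s$, of the additive functionals $\ch_0,\ch_2,\ch_3$; since the same $s$ applies to $U_1$, $U_2$ and $U_1+U_2$, the desired additivity of $\dd_2$ and $\dd_3$ on $\Rep_\Qset^d$ follows at once.

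I expect the only genuinely delicate point to be the bookkeeping: one must check that the $\ch_1$-dependence cancels \emph{completely} and that $s$ is literally identical for $U_1$, $U_2$ and $U_1+U_2$, which is precisely where the common-degree hypothesis is indispensable. It is worth stressing that the cancellation is special to $k=2,3$: the degree-four term of $\Log_+$ produces a summand proportional to $\ch_0^2\ch_2^2$ in $\Delta_4$, so that after the substitution $\ch_1=s\,\ch_0$ one finds $\dd_4(U)=\tfrac{s^4}{4}\ch_0(U)-s^2\ch_2(U)+\tfrac{1}{2}\,\ch_2(U)^2/\ch_0(U)+s\,\ch_3(U)-\ch_4(U)$, whose quadratic term $\tfrac{1}{2}\,\ch_2^2/\ch_0$ is not additive. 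This explains why \emph{weak additivity} breaks down beyond degree three.
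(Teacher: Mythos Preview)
Your proof is correct. Both your argument and the paper's hinge on exactly the same ingredient, Proposition~\ref{prop:slope}: on $\Rep_\Qset^d$ the slope $\ch_1/\ch_0$ is the fixed element $s=\tfrac{d}{r}\ch_1(V)$, so the non-additive contributions to $\dd_2,\dd_3$ collapse to linear combinations of the additive $\ch_j$'s.

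The execution differs slightly. You substitute $\ch_1=s\,\ch_0$ directly into the closed formulas \eqref{eqn:Delta 1 2 3} and read off that $\dd_2=\tfrac{s^2}{2}\ch_0-\ch_2$ and $\dd_3=\tfrac{s^3}{3}\ch_0-s\,\ch_2+\ch_3$ are linear in $\ch_0,\ch_2,\ch_3$. The paper instead inverts through $\ch=\ch_0\cdot\exp(\Log_+)$, obtains that $\dd_2+\tfrac{s}{2}\dd_1$ and $\dd_3+s\,\dd_2+\tfrac{s^2}{6}\dd_1$ are additive, and then peels off $\dd_2$ and $\dd_3$ \emph{inductively} (using the just-proved additivity of $\dd_2$ to get that of $\dd_3$). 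Your route is a touch more direct and avoids the induction; the paper's phrasing makes the structural reason for the degree-$4$ failure (the $\dd_2^2/\ch_0$ term in the exponential expansion) more transparent, though you reach the same obstruction via the $\ch_2^2/\ch_0$ term in your final paragraph. Substantively the two arguments are equivalent.
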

\begin{proof}
As $$\Log_+ = \sum_{k>0} \frac{\dd_k}{\ch_0}$$
satisfies 
$\ch_0 \cdot \exp(\Log_+) = \ch$, then each coefficient of 
\begin{equation}\label{eqn:exp di log}
\ch_0 \cdot \exp(\Log_+) = \ch_0\cdot \left(1 + \frac{\dd_1}{\ch_0} + \left(\frac{\dd_2}{\ch_0} + \frac{\dd_1^2}{2\ch_0^2}\right) + \left(\frac{\dd_3}{\ch_0} + \frac{\dd_1 \dd_2}{\ch_0^2} + \frac{\dd_1^3}{6\ch_0^3}\right) + \dots \right)
\end{equation}
must be additive. In particular, both
$$\dd_2 + \frac{1}{2}\cdot \frac{\dd_1}{\ch_0}\cdot \dd_1 \qquad \mbox{and} \qquad \dd_3 + \frac{\dd_1}{\ch_0} \cdot \dd_2 + \left(\frac{\dd_1}{\ch_0}\right)^2 \cdot \frac{\dd_1}{6}$$
are additive on $\Rep_\Qset$.

However, when restricting to $\Rep_\Qset^d$, the first expression equals $\dd_2 + d/2 \cdot \dd_1$ (see Proposition~\ref{prop:slope}); as $\dd_1=\ch_1$ is additive, then also $\dd_2$ must be additive. 

Similarly, on $\Rep^d_\Qset$ the second expression equals $\dd_3 + d \cdot \dd_2 + d^2/6 \cdot \dd_1$ and we already know that $\dd_1$ and $\dd_2$ are additive on $\Rep_\Qset^d$. Thus $\dd_3$ must also be additive and the proof is concluded.
\end{proof}

Recall that additivity on a rational vector space, such as $\Rep_\Qset^d$, is equivalent to $\Qset$-linearity.

\begin{comment}
\begin{remark}\label{rmk:più di tre non si va}
    Expanding (\ref{eqn:exp di log}) to the fourth degree, we see that there is a term of the form $\frac{\dd_2^2}{\ch_0^2}$: this phenomenon is what prevents us from extending Proposition~\ref{prop:weak additivity} to higher degrees. 

    As we commented at the end of Section~\ref{section:slope}, the component $\Rep^d_\Qset$ can be thought as the component of representation with fixed slope $d$. The proof of Proposition~\ref{prop:weak additivity} would then extend to $\dd_4$ and $\dd_5$ for representations in 
    \[ \Rep^{d,\chi}_\Qset=\{ U\in\Rep^d_\Qset\mid \dd_2(U)=\chi\dd_2(V)\}. \]
    
    In Section~\ref{section:higher discriminants?}, we will quickly digress on higher discriminants from a slightly different point of view.
\end{remark}
\end{comment}
\begin{remark}\label{rmk:più di tre non si va}
    Proposition~\ref{prop:weak additivity} stops to work from degree $4$ on: it is not difficult to find counter-examples. We digress a bit more about higher discriminants in Section~\ref{section:higher discriminants?}.
\end{remark}

\begin{example}\label{example:counterexample to additivity}
    Proposition~\ref{prop:weak additivity} is sharp: it is easy to find a counter-example to additivity. Let us take $r=2$ for simplicity. Then by Example~\ref{example:Svrtan} (see also Section~\ref{appendix:symmetric}) we have that
    \[ \ch(S^2 V)=(3, 3\ch_1(V), \frac{1}{2}\ch_1(V)^2+4\ch_2(V),\cdots ), \]
    so that in particular $\dd_2(S^2 V)=\dd_k(V)$.

    Therefore, by additivity of Chern character, we have
    \[ \ch(V\oplus S^2V)=(5,4\ch_1(V),\frac{1}{2}\ch_1(V)^2+5\ch_2(V),\cdots). \]
    By an explicit computation,
    \[ \dd_2(V\oplus S^2V)=5\ch_2(V)-\frac{11}{10}\ch_1(V)^2\neq 8\ch_2(V)-2\ch_1(V)^2=\dd_2(V)+\dd_2(S^2V). \]
\end{example}

Our main result is the following generalisation of Proposition~\ref{prop:slope}.

\begin{thm}\label{thm:Delta 2 e 3}
If $U\in\Rep^d_\Qset$, then for $k=2,3$
$$\dd_k(U) = f^{(k)}_U(r) \dd_k(V) $$
for opportunely chosen polynomials $f^{(k)}_U\in \Qset[x]$ of degree at most $d-1$.
\end{thm}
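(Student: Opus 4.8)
The plan is to reduce the statement to the multiplicative generators $P_\alpha$ of $\Rep_\Qset^d$ from Remark~\ref{rmk:P k}, compute $\dd_k$ explicitly on these generators, and then extend to an arbitrary $U$ by weak additivity. The whole argument exploits that the $P_d$ all have $\ch_0(P_d)=r$, which is what ultimately produces a \emph{polynomial} in $r$ rather than an uncontrolled rational function.

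First I would establish the formula on a single power-sum representation $P_d$. Since Example~\ref{example:homogeneity} gives $\ch_j(P_d)=d^j\,\ch_j(P_1)$ for all $j$ and $\ch_0(P_d)=r$, and since every monomial in the expressions (\ref{eqn:Delta 1 2 3}) for $\Delta_k$ has total weight $k$ in the Chern-character indices, one obtains $\Delta_k(P_d)=d^k\,\Delta_k(P_1)$, whence
$$\dd_k(P_d)=\frac{\Delta_k(P_d)}{k\,\ch_0(P_d)^{k-1}}=d^k\,\dd_k(P_1)=d^k\,\dd_k(V),$$
using $P_1=V$. Next I would pass to a general generator $P_\alpha=P_{\alpha_1}\cdots P_{\alpha_t}$ with $|\alpha|=d$ and all $\alpha_i\geq1$. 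Applying the log-multiplicativity (\ref{eqn:log multiplicativity dk}) inductively, together with $\ch_0(P_{\alpha_i})=r$ (so that $\ch_0(P_\alpha)=r^t$), yields
$$\dd_k(P_\alpha)=r^{\,t-1}\Big(\sum_{i=1}^{t}\alpha_i^{\,k}\Big)\,\dd_k(V).$$
The coefficient is the value at $x=r$ of the polynomial $f^{(k)}_{P_\alpha}(x)=x^{\,t-1}\sum_i\alpha_i^{\,k}$, whose degree is $t-1=\ell(\alpha)-1\leq d-1$, the inequality holding because each part of $\alpha$ is at least $1$, so $\ell(\alpha)\leq|\alpha|=d$.

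Finally I would assemble the general case. Writing $U=\sum_\alpha c_\alpha P_\alpha$ as a $\Qset$-linear combination of generators with $|\alpha|=d$ (possible since the $P_\alpha$ span $\Rep_\Qset^d$), the decisive point is that $\dd_k$ is $\Qset$-linear on the fixed-degree piece $\Rep_\Qset^d$ for $k=2,3$ by Proposition~\ref{prop:weak additivity}. Hence $\dd_k(U)=\big(\sum_\alpha c_\alpha f^{(k)}_{P_\alpha}(r)\big)\,\dd_k(V)$, and $f^{(k)}_U:=\sum_\alpha c_\alpha f^{(k)}_{P_\alpha}$ is a polynomial in $x$ of degree at most $d-1$, as required.

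The main obstacle, and the reason the result is confined to $k=2,3$, is precisely this last appeal to weak additivity: log-multiplicativity alone controls $\dd_k$ on each monomial $P_\alpha$, but recombining the monomials into an arbitrary $U$ requires $\dd_k$ to be additive on $\Rep_\Qset^d$, which Proposition~\ref{prop:weak additivity} guarantees only for $k=2,3$ (Example~\ref{example:counterexample to additivity} shows it genuinely fails otherwise). I would also take care to verify the homogeneity $\Delta_k(P_d)=d^k\,\Delta_k(P_1)$ directly against (\ref{eqn:Delta 1 2 3}), since this clean scaling on $P_d$ is what feeds the rest of the computation.
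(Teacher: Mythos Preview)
Your proof is correct and follows essentially the same route as the paper's: compute $\dd_k$ on the power-sum generators $P_\alpha$ via the homogeneity of Example~\ref{example:homogeneity} together with log-multiplicativity, then invoke weak additivity (Proposition~\ref{prop:weak additivity}) to pass to arbitrary $U\in\Rep_\Qset^d$. The only cosmetic difference is that the paper phrases the computation in terms of $\dd_k/\ch_0$ using (\ref{eqn:log multiplicativity}), whereas you work directly with $\dd_k$ via (\ref{eqn:log multiplicativity dk}); you are also a bit more explicit about the degree bound $t-1\leq d-1$. One small quibble: Example~\ref{example:counterexample to additivity} illustrates the failure of \emph{full} additivity for $\dd_2$ across different degrees, not the failure of weak additivity for $k\geq4$; the relevant reference for the latter is Remark~\ref{rmk:più di tre non si va}.
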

\begin{proof}
By Example~\ref{example:homogeneity}, it follows that $\ch_k(P_\ell)=\ell^k\ch(P_1)$ and more generally that $\mathsf{c}(P_\ell)=\ell^{k_1+\cdots+k_t}\mathsf{c}(P_1)$, where $\mathsf{c}=\ch_{k_1}\cdots\ch_{k_t}$. In particular we have that 
\[ \frac{\dd_k}{\ch_0}(P_\ell) = \ell^k \frac{\dd_k}{\ch_0}(V) \] 
for all $k$ and all $\ell\geq1$. 

As usual, if we write $P_\alpha=P_{\alpha_1}\cdots P_{\alpha_t}$, then by log-multiplicativity it follows that 
\[ \frac{\dd_k}{\ch_0}(P_\alpha)=\left(\sum_i \alpha_i^k\right)\frac{\dd_k}{\ch_0}(P_1). \]
In particular $\dd_k(P_\alpha)$ is a scalar multiple of $\frac{\ch_0(P_\alpha)}{\ch_0(V)} \dd_k(V) = r^{t-1} \dd_k(V)$. 

Finally, let us set $k=2$ or $k=3$. For fixed degree $d$ the monomials $P_\alpha$ generate $\Rep_\Qset^d$ and then we conclude by weak additivity (see Proposition~\ref{prop:weak additivity}), which implies $\Qset$-linearity of the restriction of $d_k$ to $\Rep_\Qset^d$.
\end{proof}

\begin{example}
We have $\frac{\dd_2}{\ch_0}(P_2) = 4 \frac{\dd_2}{\ch_0}(V)$. As $\ch_0(V) = \ch_0(P_2)$, we obtain $\dd_2(P_2) = 4 \dd_2(V)$. Also, $\frac{\dd_2}{\ch_0}(V^{\otimes 2}) = 2 \frac{\dd_2}{\ch_0}(V)$. Here, $\ch_0(V^{\otimes 2}) = r^2$ so that $\dd_2(V^{\otimes 2}) = 2r \dd_2(V)$.

Now, $V^{\otimes 2} = S^2 V \oplus \wedge^2 V$ and $P_2 = S^2 V- \wedge^2 V$ so that 
\[ S^2 V = \frac{V^{\otimes 2} + P_2}{2}\qquad \mbox{and} \qquad \wedge^2 V = \frac{V^{\otimes 2} - P_2}{2}. \] 
We conclude by weak additivity that
$$\dd_2(S^2 V) = (r + 2) \dd_2(V) \qquad \mbox{and} \qquad \dd_2(\wedge^2 V) = (r - 2) \dd_2(V),$$
and equivalently that
$$\frac{\dd_2}{\ch_0}(S^2 V) = 2\frac{r + 2}{r+1} \cdot \frac{\dd_2}{\ch_0}(V) \qquad\mbox{and}\qquad \frac{\dd_2}{\ch_0}(\wedge^2 V) = 2\frac{r - 2}{r-1} \cdot \frac{\dd_2}{\ch_0}(V).$$

Also 
$$\Delta_2(S^2 V) = \frac{(r+1)(r + 2)}{2} \Delta_2(V) \qquad \mbox{and} \qquad \Delta_2(\wedge^2 V) = \frac{(r-1)(r - 2)}{2} \Delta_2(V).$$
\end{example}

\begin{example}\label{example:symmetric}
    Using formula (\ref{eqn:ch of Sym}) in Example~\ref{example:Svrtan} (see Section~\ref{appendix:symmetric}), we can explicitly compute $f^{(k)}_{S^m V}(r)$ in Theorem~\ref{thm:Delta 2 e 3}. The result is the following:
    \begin{equation}\label{eqn:Delta 2 Sym}
        f^{(2)}_{S^m V}(r)=\frac{m(m+r)}{(r+1)}\frac{r_m}{r}
    \end{equation}
    and
    \begin{equation}\label{eqn:Delta 3 Sym}
        f^{(3)}_{S^m V}(r)=\frac{m(m+r)(2m+r)}{(r+1)(r+2)}\frac{r_m}{r},
    \end{equation}
    where in both cases $r_m=\binom{m+r-1}{r-1}$ denotes the dimension of $S^m V$.
\end{example}

We conclude with the following easy remark about the additivity properties of the coefficients $f^{(k)}_U(r)$, which will be useful later.
\medskip

\begin{proposition}\label{prop:properties of fU}
Set $k=2,3$.
\begin{enumerate}
    \item If $U_1,U_2\in\Rep_\Qset$, then
    \[ f^{(k)}_{U_1\cdot U_2}(r)=\ch_0(U_2)f^{(k)}_{U_1}(r)+\ch_0(U_1)f^{(k)}_{U_2}(r). \]
    \item If $U_1,U_2\in\Rep_\Qset^d$, then 
    \[ f^{(k)}_{U_1 + U_2}(r)=f^{(k)}_{U_1}(r)+f^{(k)}_{U_2}(r). \]
\end{enumerate}
\end{proposition}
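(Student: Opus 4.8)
The plan is to obtain both identities as immediate consequences of Theorem~\ref{thm:Delta 2 e 3}, by substituting the defining relation $\dd_k(U)=f^{(k)}_U(r)\,\dd_k(V)$ into the multiplicativity and weak-additivity properties of $\dd_k$ already established, and then cancelling the common factor $\dd_k(V)$. Throughout I read $U_1,U_2$ as \emph{homogeneous} elements, each living in a single $\Rep^{d}_\Qset$, so that the coefficients $f^{(k)}_{U_i}(r)$ are all defined through Theorem~\ref{thm:Delta 2 e 3}; this is necessary since $\dd_k$ fails to be additive across distinct degrees (cf.\ Example~\ref{example:counterexample to additivity}). In part (1) the product $U_1\cdot U_2$ then lies in $\Rep^{d_1+d_2}_\Qset$, so $f^{(k)}_{U_1\cdot U_2}(r)$ is likewise well-defined.

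For part (1) I would start from the log-multiplicativity identity (\ref{eqn:log multiplicativity dk}), namely $\dd_k(U_1\cdot U_2)=\ch_0(U_2)\dd_k(U_1)+\ch_0(U_1)\dd_k(U_2)$. Applying Theorem~\ref{thm:Delta 2 e 3} to each of the three homogeneous terms rewrites this as
\[
f^{(k)}_{U_1\cdot U_2}(r)\,\dd_k(V)=\bigl(\ch_0(U_2)f^{(k)}_{U_1}(r)+\ch_0(U_1)f^{(k)}_{U_2}(r)\bigr)\dd_k(V).
\]
For part (2), with $U_1,U_2$ in the \emph{same} degree $d$, I would instead invoke the weak additivity of Proposition~\ref{prop:weak additivity}, $\dd_k(U_1+U_2)=\dd_k(U_1)+\dd_k(U_2)$ for $k=2,3$, and substitute Theorem~\ref{thm:Delta 2 e 3} to obtain
\[
f^{(k)}_{U_1+U_2}(r)\,\dd_k(V)=\bigl(f^{(k)}_{U_1}(r)+f^{(k)}_{U_2}(r)\bigr)\dd_k(V).
\]

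In both cases the conclusion follows by cancelling $\dd_k(V)$. The only point requiring an argument is the legitimacy of this cancellation: the quantities $f^{(k)}_{\bullet}(r)$ are scalars, whereas $\dd_k(V)=\Delta_k(V)/(k\,r^{k-1})$ is a fixed element of $\Qset[[a_1,\dots,a_r]]$, and an equality $\lambda\,\dd_k(V)=\mu\,\dd_k(V)$ between scalar multiples forces $\lambda=\mu$ as long as $\dd_k(V)\neq 0$. I would therefore record the elementary nonvanishing of $\dd_k(V)$ for $k=2,3$: for instance $\Delta_2(V)=\ch_1(V)^2-2r\,\ch_2(V)=p_1^2-r\,p_2$ is a nonzero symmetric polynomial (and similarly in degree $3$), so $\dd_k(V)$ is a nonzero element of the integral domain $\Qset[[a_1,\dots,a_r]]$. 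This nonvanishing is the one genuine ingredient beyond pure transcription of earlier results; I expect no real obstacle otherwise, the only care being to keep track of degrees so that Theorem~\ref{thm:Delta 2 e 3} applies to each term.
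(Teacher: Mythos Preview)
Your proposal is correct and follows exactly the paper's approach: the paper's proof is a two-line citation of log-multiplicativity (\ref{eqn:log multiplicativity dk}) for part (1) and weak additivity (Proposition~\ref{prop:weak additivity}) for part (2), which is precisely what you do. Your added care about homogeneity (so that $f^{(k)}_{U_i}$ is actually defined via Theorem~\ref{thm:Delta 2 e 3}) and about the nonvanishing of $\dd_k(V)$ needed to cancel are valid refinements that the paper leaves implicit; note in particular that $\dd_k(V)\neq 0$ requires $r\geq 2$ (resp.\ $r\geq 3$) for $k=2$ (resp.\ $k=3$), consistent with the denominators appearing later in Theorem~\ref{thm:casimiri}.
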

\begin{proof}
    The first equality follows from log-multiplicativity (\ref{eqn:log multiplicativity dk}); the second equality follows from weak additivity (Proposition~\ref{prop:weak additivity}).
\end{proof}

%%%%%%%%%%%%%%%%%%%%%%%%%%%%%%%%%%%%%%
\subsection{What about higher discriminants?}\label{section:higher discriminants?}

As we already noticed in Remark~\ref{rmk:più di tre non si va}, the results in this sections seem not to generalise to higher discriminants.

There are two possible solutions of this problem. The first one consists in "ignoring" the problem: for a representation $W$, we only look at the coeffcient of $\dd_k(W)$ along the coordinate $\dd_k(V)$. This point of view seems to behave well and should lead to an algorithm to compute plethysms. We will explore and study this phenomenon in another work.

The second possible solution consists in "perturbing" the logarithmic classes in order to get the sought result. Let us explain this approach by having a look at the case of degree $4$. Expanding the expression in Definition~\ref{defn:Log chern}, we find
\[
\Delta_4(W)= \ch_1(W)^4 - 4\ch_0(W) \ch_1(W)^2 \ch_2(W) + 2 \ch_0(W)^2 (\ch_2(W)^2 + 2 \ch_1(W) \ch_3(W)) - 4 \ch_0(W)^3 \ch_4(W).
\]
One can check that indeed $\Delta_4(S^m V)$ is not a multiple of $\Delta_4(V)$.

\begin{questionno}
    Can we modify $\Delta_4$ in order to get a statement like Theorem~\ref{thm:Delta 2 e 3} for symmetric representations?
\end{questionno}

The answer to the question above is surprisingly yes. For any $t\geq1$, define
\begin{align*} 
\Delta_{4,t}(W)= & \, t \ch_1(W)^4-4t\ch_0(W)\ch_1(W)^2\ch_2(W) \\ 
+ & \,2\ch_0(W)^2\left((t+1)\ch_2(W)^2+2(t-1)\ch_1(W)\ch_3(W)\right) \\
- & \,4(t-1)\ch_0(W)^3\ch_4(W).
\end{align*}

\begin{lemma}\label{lemm:Delta 4 t for Sym}
    For any $m\geq0$ we have 
    \[ \Delta_{4,r}(S^m V)=f_{S^m V}^{(4)}(r)\left(\frac{r_m}{r}\right)^4\Delta_{4,r}, \]
    where 
    \[ f_{S^m V}^{(4)}(r)=\frac{m(m+r)(m^2+rm+r(r+1))}{(r+1)(r+2)(r+3)}. \]
\end{lemma}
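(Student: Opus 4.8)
The plan is to first rewrite the modified discriminant in a transparent form and then reduce everything to the symmetric-power formulas already available. Comparing the five degree-four monomials $\ch_1^4,\ \ch_0\ch_1^2\ch_2,\ \ch_0^2\ch_2^2,\ \ch_0^2\ch_1\ch_3,\ \ch_0^3\ch_4$ one checks that, for every $t$,
\[ \Delta_{4,t}(W) = (t-1)\,\Delta_4(W) + \Delta_2(W)^2, \]
with $\Delta_4,\Delta_2$ the logarithmic classes of \eqref{eqn:Delta 1 2 3} (indeed $(t-1)(1,-4,2,4,-4)+(1,-4,4,0,0)=(t,-4t,2(t+1),4(t-1),-4(t-1))$ in that basis). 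This identity is the conceptual heart of the matter: $\Delta_4$ alone is not proportional to $\Delta_4(V)$ on symmetric powers — its failure at degree four (Remark~\ref{rmk:più di tre non si va}) is governed precisely by a $\Delta_2^2$-term — and adding the correction $\Delta_2^2$ with the weight $t-1=r-1$ is what restores proportionality. So the whole statement reduces to controlling $\Delta_4(S^mV)$ and $\Delta_2(S^mV)^2$ separately.

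The square term is immediate from what is already proved. By Theorem~\ref{thm:Delta 2 e 3} and Example~\ref{example:symmetric} one has $\dd_2(S^mV)=f^{(2)}_{S^mV}(r)\,\dd_2(V)$, and passing from $\dd_2=\Delta_2/(2\ch_0)$ back to $\Delta_2$ gives
\[ \Delta_2(S^mV) = \frac{m(m+r)}{r+1}\Big(\tfrac{r_m}{r}\Big)^2\Delta_2(V), \]
so that $\Delta_2(S^mV)^2$ is an explicit scalar multiple of $\Delta_2(V)^2$. Hence the only genuinely new object to compute is $\Delta_4(S^mV)$.

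For this I would feed the Svrtan expansion \eqref{eqn:ch of Sym} — equivalently, the power sums $\pi_k:=k!\,\ch_k(S^mV)=\sum_{|b|=m}\langle b,a\rangle^k$ expanded by the multinomial theorem into the elementary sums $\sum_{|b|=m}\prod_i b_i^{\gamma_i}$ — into $\ch_0,\dots,\ch_4$ of $S^mV$, and substitute into $\Delta_4=\ch_1^4-4\ch_0\ch_1^2\ch_2+2\ch_0^2\ch_2^2+4\ch_0^2\ch_1\ch_3-4\ch_0^3\ch_4$. This is the same kind of computation used in Example~\ref{example:symmetric} to produce $f^{(2)},f^{(3)}$, now pushed one degree further. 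After collecting, the expected outcome is a closed expression of the shape
\[ \Delta_4(S^mV) = A(m,r)\,\Delta_4(V) + B(m,r)\,\Delta_2(V)^2 \]
with $A,B$ explicit rational functions of $m,r$. That $\Delta_4(S^mV)$ lands in the plane spanned by $\Delta_4(V)$ and $\Delta_2(V)^2$ is itself a feature special to symmetric powers (for general Schur functors one expects genuinely new degree-four invariants), and establishing it is part of the work.

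Finally I would assemble $\Delta_{4,r}(S^mV)=(r-1)\Delta_4(S^mV)+\Delta_2(S^mV)^2$ and verify that the $\Delta_4(V)$- and $\Delta_2(V)^2$-coefficients of the result are in the ratio $(r-1):1$, i.e.\ that the combination lies on the line $\Qset\cdot\Delta_{4,r}(V)$. This proportionality is exactly what singles out the value $t=r$; once it is checked, the scalar factor is the coefficient $A(m,r)$ of $\Delta_4(V)$, whose closed form yields the stated $f^{(4)}_{S^mV}(r)\big(\tfrac{r_m}{r}\big)^4$. The main obstacle is the degree-four bookkeeping in the previous step: unlike the degrees $2,3$ handled by weak additivity (Proposition~\ref{prop:weak additivity}), here one cannot avoid tracking the $\Delta_2(V)^2$-component of $\Delta_4(S^mV)$, so the delicate point is to organize the Svrtan (or power-sum) combinatorics well enough to extract both $A(m,r)$ and $B(m,r)$ and confirm the cancellation forcing $t=r$. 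A numerical sanity check is available but subtle: for $r=2,3$ there is an extra relation $\Delta_4(V)\in\Qset\,\Delta_2(V)^2$ which degenerates the two-dimensional picture (cf.\ Appendix~\ref{appendix:low ranks}), so the coefficient is only genuinely tested for $r\ge 4$.
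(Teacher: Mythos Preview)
Your plan is correct and, at its core, coincides with the paper's proof: the paper simply says ``this is a direct computation, using formula~\eqref{eqn:ch of Sym}'', i.e.\ plug Svrtan's expansion of $\ch(S^mV)$ into the definition of $\Delta_{4,r}$ and simplify. Your route differs in that you first establish the identity $\Delta_{4,t}=(t-1)\Delta_4+\Delta_2^2$ and then split the task into the already-known piece $\Delta_2(S^mV)^2$ and a fresh computation of $\Delta_4(S^mV)$. This decomposition is a genuine conceptual addition the paper does not make: it explains \emph{why} the value $t=r$ is singled out (it is the unique weight making the $\Delta_2(V)^2$-contributions from $(r-1)B$ and $C^2$ combine correctly with the $\Delta_4(V)$-coefficient $A$), whereas the paper only reports that $\Delta_{4,t}$ was found ``by brute force''. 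The trade-off is that you still have to carry out essentially the same degree-four bookkeeping to extract $A(m,r)$ and $B(m,r)$, so the computational burden is comparable; what your approach buys is a clearer picture of the mechanism and a reusable template for the questions the paper leaves open about higher $\Delta_{k,t}$.
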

\begin{proof}
    This is a direct computation, using formula (\ref{eqn:ch of Sym}).
\end{proof}

\begin{remark}
    So far we have worked with a vector space of fixed dimension $r$. The classes $\Delta_k$ have the same shape independently of $r$. Notice instead that in Lemma~\ref{lemm:Delta 4 t for Sym} we have different classes for different $r$.
\end{remark}

The class $\Delta_{4,t}$ has been obtained by brute force. We wrote down a parametric expression of the tentative $\Delta_{4,t}$ and then we imposed the equality in Lemma~\ref{lemm:Delta 4 t for Sym}: in this way we obtained a unique solution (the one displayed). 

The same is also true for $\Delta_2$ and $\Delta_3$: those are the unique classes for which the symmetric representations transforms as in Example~\ref{example:symmetric}. 

\begin{question}
    For any $k\geq5$, can we find modifications $\Delta_{k,t}$ of $\Delta_k$ such that $\Delta_{k,r}(S^m V)$ is a multiple of $\Delta_{k,r}(V)$?
\end{question}

It is easy to see that, for example, if $\alpha=(m,1,0,\dots,0)$ then $\Delta_{4,t}(\sS^\alpha V)$ is not a multiple of $\Delta_{4,t}$. This can be seen as a lack of weak additivity in degrees higher than 3 (even in this modified form). Nevertheless, numerical examples seem to suggest that modifications exist for partitions other than symmetric ones. 

In the following, we denote by $\delta_1=(1,0,\dots,0)$ the first fundamental weight.

\begin{question}
    For an adequately chosen partition $\alpha$, does there exist a modification $\Delta_{4,\alpha}$ such that $\Delta_{4,\alpha}(\sS^{\alpha+m\delta_1}V)$ is a multiple of $\Delta_{4,\alpha}(\sS^\alpha V)$?
\end{question}

%%%%%%%%%%%%%%%%%%%%%%%%%%%%%%%%%%%%%%
%%%%%%%%%%%%%%%%%%%%%%%%%%%%%%%%%%%%%%
\section{Casimir actions}\label{section:Casimir actions}

In this section we determine the expressions $f^{(k)}_U(r)$ appearing in Theorem~\ref{thm:Delta 2 e 3} when $U=\sS^\alpha V$ is a Schur functor. The key observation is to identify them with the action of quadratic and cubic Casimir elements of the \emph{universal enveloping algebra} $\mathfrak {U}(\gl(n))$. 

Let us state here the main result of the section. Recall that $\alpha=(\alpha_1,\dots,\alpha_r)$ is a partition and $r_\alpha$ denotes the dimension of the Schur module $\sS^\alpha V$.

\begin{thm}\label{thm:casimiri}
    Keep notations as in Theorem~\ref{thm:Delta 2 e 3}. 
    \begin{enumerate}
        \item For the operator $\dd_2$ we have
        \[ f^{(2)}_{\sS^\alpha V}(r)=\frac{\dot\delta_r^{(2)}(\alpha)}{(r-1)(r+1)}\frac{r_\alpha}{r}, \]
        where 
        \begin{equation*}
        \dot\delta_r^{(2)}(\alpha)=(r-1)\sum_{i=1}^r\alpha_i^2-2\sum_{1\leq i<j\leq r}\alpha_i\alpha_j+r \sum_{i=1}^r(r+1-2i)\alpha_i\, .
        \end{equation*}
        
        \item For the operator $\dd_3$ we have
        \[ f^{(3)}_{\sS^\alpha V}(r)=\frac{\dot\delta_r^{(3)}(\alpha)}{(r-2)(r-1)(r+1)(r+2)}\frac{r_\alpha}{r}, \]
        where
        \begin{align*}
        \dot\delta_r^{(3)}(\alpha)= & 
        2(r-2)(r-1)\sum_{i=1}^r\alpha_i^3-6(r-2)\sum_{1\leq i\neq j\leq r}\alpha_i^2\alpha_j+24\sum_{1\leq i<j<k\leq r}\alpha_i\alpha_j\alpha_k \\ 
        + & 3r(r-2) \sum_{i=1}^r (r+1-2i)\alpha_i^2 -12r\sum_{i<j}(r+1-i-j)\alpha_i\alpha_j \\
        + & r^2\sum_{i=1}^r (6i^2-6i(r+1)+r^2+3r+2)\alpha_i\, . 
        \end{align*}
    \end{enumerate}
\end{thm}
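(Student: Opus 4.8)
The plan is to compute the scalars $f^{(k)}_{\sS^\alpha V}(r)$ by reinterpreting the Chern character as a trace over $\ue(\gl_r)$. Writing $H=\sum_i a_i E_{ii}$ for the generic diagonal element, the evaluation $q_i\mapsto e^{a_i}$ gives $\ch(\sS^\alpha V)=\Tr_{\sS^\alpha V}(e^H)$ (here the primitivity of $H$ in the cocommutative Hopf algebra $\ue(\gl_r)$ is exactly what makes $\ch$ multiplicative), hence $\ch_k(\sS^\alpha V)=\frac{1}{k!}\Tr_{\sS^\alpha V}(H^k)$ and $\ch_0=r_\alpha$. Substituting these into the explicit expressions (\ref{eqn:Delta 1 2 3}) for $\Delta_2$ and $\Delta_3$, and using that the weight multiplicities of $\sS^\alpha V$ are invariant under the Weyl group $\mathfrak S_r$, I would reduce each $\Delta_k(\sS^\alpha V)$ to a few symmetric moments of the diagonal generators. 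For $k=2$ this collapses, after subtracting the degree-one (slope) contribution, to the clean $a$-independent identity
\[
(r-1)\,f^{(2)}_{\sS^\alpha V}(r)=\Tr_{\sS^\alpha V}\Big(\textstyle\sum_i \hat E_{ii}^2\Big),\qquad \hat E_{ii}:=E_{ii}-\tfrac1r\textstyle\sum_j E_{jj},
\]
that is, the trace of the squared traceless Cartan part; the analogous reduction for $k=3$ yields the trace of the corresponding traceless Cartan cubic.

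The crucial second step is to evaluate these Cartan traces by invariance. The point is that $B_U(X,Y)=\Tr_{\sS^\alpha V}(XY)$ is an $\operatorname{ad}$-invariant symmetric bilinear form on the simple Lie algebra $\ssl_r$, hence a multiple of the Killing form. Since the Cartan subalgebra $\mathfrak h$ is $\perp$ to the root spaces and accounts for $r-1$ of the $r^2-1$ dimensions of $\ssl_r$, the Cartan part of the quadratic Casimir $\hat C_2=\sum_i \hat E_{ii}^2+\sum_{i\neq j}E_{ij}E_{ji}$ carries the fraction $(r-1)/(r^2-1)=1/(r+1)$ of its total trace, giving
\[
\Tr_{\sS^\alpha V}\Big(\textstyle\sum_i \hat E_{ii}^2\Big)=\frac{1}{r+1}\,\hat c_2(\alpha)\,r_\alpha,
\]
where $\hat c_2(\alpha)$ is the eigenvalue of $\hat C_2$ on $\sS^\alpha V$. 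The latter is computed once and for all on a highest weight vector by the standard formula $\hat c_2(\alpha)=\sum_i\alpha_i^2+\sum_i(r+1-2i)\alpha_i-\tfrac{|\alpha|^2}{r}$; clearing denominators gives $\dot\delta_r^{(2)}(\alpha)=r\,\hat c_2(\alpha)$ and the stated formula for $f^{(2)}$.

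For $k=3$ I would run the same argument with the symmetric trilinear form $\Tr_{\sS^\alpha V}(XYZ)$ in place of $B_U$. On $\ssl_r$ the space of $\operatorname{ad}$-invariant symmetric cubic forms is one-dimensional for $r\ge 3$ (spanned by the $d$-symbol $\Tr_V(X\{Y,Z\})$) and vanishes for $r=2$, so $\Tr_{\sS^\alpha V}(XYZ)$ is a fixed multiple of it, and the Cartan cubic moment becomes a universal fraction of the cubic Casimir eigenvalue $\hat c_3(\alpha)$, again evaluated on the highest weight. This structural fact is already visible in the target: the factor $(r-2)$ in the denominator of $f^{(3)}$, together with the identical vanishing of $\dot\delta_r^{(3)}$ at $r=2$ (each term carries a factor $r-2$ or an empty sum), records exactly the non-existence of a cubic invariant for $\ssl_2$.

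I expect the cubic case to be the main obstacle. One must (i) separate the genuinely primitive cubic combination singled out by $\Delta_3$ from the reducible pieces built out of $C_1=\sum_i E_{ii}$ and $C_2$, tracking the several inequivalent Cartan moments (such as $\sum_i\hat E_{ii}^3$ and the mixed terms); (ii) carry out the highest-weight evaluation of the cubic Casimir including its $\rho$-shift, which is what generates the involved coefficients $6i^2-6i(r+1)+r^2+3r+2$ in $\dot\delta_r^{(3)}$; and (iii) fix the overall normalization constant. For the last point I would cross-check against the symmetric powers, where $f^{(2)}_{S^mV}$ and $f^{(3)}_{S^mV}$ are already known in closed form from Example~\ref{example:symmetric}, thereby pinning down any residual constant and confirming both formulas.
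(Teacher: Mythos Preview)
Your approach is correct and genuinely different from the paper's. The paper never computes $\Delta_k(\sS^\alpha V)$ directly; instead it introduces the characters $\phi^{(k)}_U(r)=\Tr_U(C^{(k)}/\text{const})$ for explicit Casimir elements $C^{(2)},C^{(3)}\in\ue(\ssl_r)$, verifies that both $f^{(k)}$ and $\phi^{(k)}$ are log-multiplicative and \emph{weakly additive} on each $\Rep_\Qset^d$, checks by hand that they agree on symmetric powers $S^mV$ (using Svrtan's formula), and concludes $f^{(k)}=\phi^{(k)}$ on all of $\Rep_\Qset^d$; the explicit $\dot\delta_r^{(k)}$ then drops out of the Harish--Chandra isomorphism. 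Your route bypasses weak additivity entirely: you compute $\Delta_k(\sS^\alpha V)$ as a polynomial in the Chern roots with coefficients the Cartan moments $\Tr_{\sS^\alpha V}(E_{i_1i_1}\cdots E_{i_ki_k})$, and then invoke the one-dimensionality of $\operatorname{ad}$-invariant symmetric $k$-forms on $\ssl_r$ (for $k=2,3$) to reduce all traceless Cartan moments to a single proportionality constant $\kappa$ (resp.\ $\kappa'$) times the corresponding $V$-moments. This is more structural---it explains \emph{why} a Casimir eigenvalue must appear, rather than positing one and checking---and, carried through, even reproves the proportionality $\dd_k(\sS^\alpha V)\in\Qset\cdot\dd_k(V)$ for Schur modules without Proposition~\ref{prop:weak additivity}. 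In fact your cubic reduction is cleaner than you suggest: writing $T_i=E_{ii}-I/r$, the combination singled out by the $p_3$-coefficient is exactly $\Tr_U(T_i^3)-3\Tr_U(T_i^2T_j)+2\Tr_U(T_iT_jT_k)=\kappa'\cdot\big[(r-1)(r-2)+3(r-2)+4\big]/r^2=\kappa'$, so $f^{(3)}=\kappa'$ on the nose. The price you pay is in step~(iii): unlike the quadratic case, $\Tr_U(C_3^0)$ mixes the symmetric cubic constant $\kappa'$ with the quadratic $\kappa$ (since $\Tr_U(XYZ)$ is only cyclic-, not fully, symmetric), so extracting $\kappa'$ from a standard cubic Casimir eigenvalue requires first subtracting the $\kappa$-contribution---which is precisely why the paper's $C^{(3)}=2C_3^0-rC^{(2)}$ carries that correction term. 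Your proposed cross-check against $S^mV$ handles this, and at that point the two proofs reconverge.
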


Before the proof of Theorem~\ref{thm:casimiri} in Section~\ref{section:proof of Casimir}, let us recall in the next section some facts about universal enveloping algebras and Casimir elements.

%%%%%%%%%%%%%%%%%%%%%%%%%%%%%%%%%%%%%%%%%
\subsection{Universal enveloping algebra and Casimir elements}

We refer to \cite{Humphreys} for the background about Lie algebras and their universal enveloping algebras.
\medskip

If $\mathfrak{g}$ is a Lie algebra, we denote by $\ue(\mathfrak{g})$ its universal enveloping algebra.
Recall that $\ue(\mathfrak{g})$ is constructed by quotienting the tensor algebra $T(\mathfrak{g})$ via the bracket relations. This endows $\ue(\mathfrak{g})$ both with a natural filtration
\[ \ue_0(\mathfrak{g})\subset \ue_1(\mathfrak{g})\subset \ue_2(\mathfrak{g})\subset\cdots \]
and with a natural structure of Hopf algebra,
\[ \epsilon\colon \ue(\mathfrak{g})\longrightarrow\K\qquad\mbox{and}\qquad \Delta\colon \ue(\mathfrak{g})\longrightarrow \ue(\mathfrak{g})\otimes \ue(\mathfrak{g}). \]
Elements in $\ue_m(\mathfrak{g})/\ue_{m-1}(\mathfrak{g})$ correspond, via the Poincaré-Birkhoff–Witt Theorem, to symmetric $m$-tensors, and the maps $\epsilon$ and $\Delta$ are the counit and coproduct maps, respectively. 
The coproduct $\Delta$ describes the action on the tensor product of representations. 
\medskip

Our first remark is that the trace of the action of certain elements of $\ue(\ssl_r)$ satisfies additivity and log-multiplicativity identities. 

If $C\in\ue(\gl_r)$ and $W$ is a $\gl_r$-representation, then we denote by $\Tr_W C$ the trace of the action on $C$ on $W$.

\begin{lemma}\label{lemma:additivity for phi}
Let $C \in \ue(\ssl_r) \subset \ue(\gl_r)$ be an element such that $C\in\ker(\epsilon)\cap \ue_3(\ssl_r)$. 

For every $U_1,U_2 \in \Rep_\Qset$, the trace of the action of $C$ satisfies
$$\Tr_{U_1 + U_2} C = \Tr_{U_1} C + \Tr_{U_2} C \qquad\mbox{and}\qquad \Tr_{U_1 \cdot U_2} C = \Tr_{U_1} C \cdot \dim(U_2) + \dim(U_1) \cdot \Tr_{U_2} C.$$ 
\end{lemma}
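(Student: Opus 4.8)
The additivity statement is essentially immediate and I would dispose of it first. For any element $C \in \ue(\gl_r)$ acting on representations, the action on a direct sum $U_1 \oplus U_2$ is block-diagonal, so the trace splits as $\Tr_{U_1 + U_2} C = \Tr_{U_1} C + \Tr_{U_2} C$. This extends $\Qset$-linearly to all of $\Rep_\Qset$, and the hypotheses $C \in \ker(\epsilon) \cap \ue_3(\ssl_r)$ play no role here — additivity holds for any $C$. So the content of the lemma is really the multiplicative formula.

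\emph{The plan is to} reduce the multiplicative identity to the structure of the coproduct $\Delta$. The action of $C$ on a tensor product $U_1 \otimes U_2$ is given by applying $\Delta(C)$ and then letting the two tensor factors act on $U_1$ and $U_2$ respectively, so I would compute $\Tr_{U_1 \cdot U_2} C = \Tr_{U_1 \otimes U_2} \Delta(C)$, using that $\Tr_{U_1 \otimes U_2}(A \otimes B) = \Tr_{U_1}(A)\cdot \Tr_{U_2}(B)$. Thus everything hinges on writing $\Delta(C)$ in a form I can take traces of term by term. \emph{The key step} is to show that, under the hypotheses on $C$, the coproduct has the shape
\[
\Delta(C) = C \otimes 1 + 1 \otimes C + (\text{terms whose two tensor legs are both in } \ker\epsilon).
\]
The first two terms contribute exactly $\Tr_{U_1}(C)\dim(U_2) + \dim(U_1)\Tr_{U_2}(C)$, since $\Tr_{U_2}(1) = \dim(U_2)$ and likewise for $U_1$. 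The claim is therefore that the remaining cross terms contribute zero trace.

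To see why the cross terms vanish, I would use that $\epsilon$ is precisely the map recording the trace on the one-dimensional augmentation, and more concretely that any product $X_1 \cdots X_m$ of elements $X_i \in \ssl_r$ has trace zero in \emph{any} finite-dimensional representation when $m=1$ — because $\ssl_r$ is spanned by commutators (or because its elements are traceless in the standard rep and the character of $\ssl_r$ on any rep kills the abelianization). More carefully: writing $\Delta$ via the primitivity of Lie algebra elements ($\Delta(X) = X\otimes 1 + 1\otimes X$ for $X\in\gl_r$) and expanding for a degree-$\le 3$ element, every cross term of the reduced coproduct $\bar\Delta(C)$ lies in $\ker\epsilon \otimes \ker\epsilon$, and on each leg the lowest-degree surviving piece is a single $\ssl_r$ element (or a product thereof). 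I would argue that each such leg, being an element of $\ue(\ssl_r)$ in the augmentation ideal expressible through commutators, has vanishing trace on any finite-dimensional representation, so the whole cross term dies. \emph{The main obstacle} I anticipate is exactly this last point: justifying cleanly that every cross leg that appears is a sum of products with at least one traceless $\ssl_r$-factor, which requires carefully tracking the degree-$\le 3$, augmentation-ideal structure through the coproduct rather than hand-waving. This is where the hypotheses $C \in \ue_3(\ssl_r)$ and $C \in \ker\epsilon$ are genuinely used, and I would organize the expansion of $\bar\Delta(C)$ by PBW degree to make the bookkeeping transparent.
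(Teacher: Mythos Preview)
Your approach matches the paper's: both dismiss additivity as trivial, then compute $\Tr_{U_1 \otimes U_2} C$ via the coproduct $\Delta(C) = C \otimes 1 + 1 \otimes C + (\text{cross terms})$ and argue that the cross terms contribute zero trace. The paper does this explicitly for degree $2$ (writing $C = \sum_i x_i y_i$ and noting $\Tr(x_i)=\Tr(y_i)=0$ since $x_i,y_i \in \ssl_r$) and waves at degree $3$ with ``a similar proof works''.

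There is, however, a genuine error in your justification of why the cross terms vanish. You assert that ``each such leg, being an element of $\ue(\ssl_r)$ in the augmentation ideal expressible through commutators, has vanishing trace on any finite-dimensional representation'', and later reformulate the goal as showing each leg is ``a sum of products with at least one traceless $\ssl_r$-factor''. Neither version works: a product of traceless elements is not traceless. For instance $E_{12}E_{21}$ lies in the augmentation ideal of $\ue(\ssl_r)$ but acts with nonzero trace on nontrivial representations (it contributes to the quadratic Casimir). The correct observation --- and the one the paper uses --- is that since $C$ has filtration degree $\le 3$ and both legs of any cross term in $\bar\Delta(C)$ have degree $\ge 1$, at least \emph{one} of the two legs has degree exactly $1$, i.e.\ is a single element of $\ssl_r$. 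That one leg has trace zero (because $\ssl_r=[\ssl_r,\ssl_r]$), and so $\Tr_{U_1}(a)\,\Tr_{U_2}(b)=0$ since one factor vanishes. This is also precisely why the argument breaks in degree $\ge 4$: there one encounters cross terms with both legs of degree $\ge 2$, and neither need have zero trace.
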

\begin{proof}
Additivity always holds. 
Let us then look at log-multiplicativity.

First of all, let us notice that if $C$ has degree $1$, i.e.\ $C\in\ssl_r\subset\ue(\ssl_r)$, then it is primitive. Primitive here means that $\Delta(C)=C\otimes 1+1\otimes C$, so that the claim is straightforward.

Let us now assume that $C\in\ker(\epsilon)\cap \ue_2(\ssl_r)$ is quadratic. Let $U_1, U_2$ be polynomial representations of $\GL_r$. If $C$ is of the form $C = \sum_i x_i y_i$, where $x_i, y_i \in \ssl_r$, then
$$\Delta(C) = C \otimes 1 + 1 \otimes C + \sum_i (x_i \otimes y_i + y_i \otimes x_i).$$
As a consequence, the trace of the action of $C$ on $U_1 \otimes U_2$, which is prescribed by $\Delta(C)$, equals the trace of $C\otimes 1 + 1 \otimes C$, as $\Tr(x_i) = \Tr(y_i) = 0$ since every element in $\ssl_r$ is a commutator. However $\Tr_{U_1 \otimes U_2}(C \otimes 1) = (\Tr_{U_1} C) \cdot \dim(U_2)$ and similarly $\Tr_{U_1 \otimes U_2}(1 \otimes C) = \dim(U_1)\cdot(\Tr_{U_2} C)$. We thus obtain log-multiplicativity.

A similar proof works for cubic elements.
\end{proof}

\begin{remark}\label{rmk:se primitivo di grado 1 anche gl va bene}
    If $C$ has degree $1$ and is primitive, then the proof above works also if $C\in\gl_r$.
\end{remark}

\begin{remark}
    The proof above does not apply to elements of degree bigger or equal to four, the reason being similar as in Remark~\ref{rmk:più di tre non si va}.
\end{remark}

\begin{remark}
    We stated Lemma~\ref{lemma:additivity for phi} for elements in $\ue(\ssl_r)\subset\ue(\gl_r)$, but in fact only elements in the center were needed. In fact let us notice that the trace of the corresponding action is invariant under the Adjoint action of $\GL_r$. Therefore we can substitute such an element with its $\operatorname{U}_r$-mean, which is an element in the center of $\ue(\gl_r)$. 
\end{remark}
%\claudio{Ma è immediato che la $\operatorname{U}_r$-media di un elemento in $\ue(\ssl_r)$ sta ancora in (centro di) $\ue(\ssl_r)$.}

Finally, in this paper by \emph{Casimir element} of Lie algebra $\mathfrak{g}$ we mean every element in the centre of its universal enveloping algebra. Let us describe two special Casimir elements that will be useful in the next section.
\medskip

First of all, let us denote by $E_{ij}$ the elementary matrix whose only non-zero entry is $1$ in position $(i,j)$. These form the standard basis of $\gl_r$. 

Consider the following three elements of $\ue(\gl_r)$:
$$I = \sum_{i=1}^r E_{ii}, \qquad C_{2}=\sum_{i, j = 1}^r E_{ij}E_{ji}\qquad \mbox{and} \qquad C_{3} = \sum_{i, j, k=1}^r E_{ij}E_{jk}E_{ki}.$$

It is immediate to show that $I$, $C_2$ and $C_3$ belong to the center of $\ue(\gl_r)$, and are then Casimir elements (see for example \cite[Example~4.2.5]{Molev}).

We also consider the correction $C^{(2)} = \sum_{i, j = 1}^r (E_{ij}- \delta_{ij}I/r)(E_{ji} - \delta_{ji}I/r) \in \ue(\ssl_r)\subset \ue(\gl_r)$. Then $C^{(2)}$ still lies in the center of $\ue(\gl_r)$ and one easily computes
\begin{equation}\label{eqn:casimiro quadratico in sl}
C^{(2)} = C_2 - \frac{1}{r} I^2.
\end{equation}
Similarly, if $C_3^0 = \sum_{i, j, k=1}^r (E_{ij} - \delta_{ij}I/r)(E_{jk} - \delta_{jk}I/r)(E_{ki} - \delta_{ki}I/r)$ then
\begin{equation*}\label{eqn:casimiro cubico}
C_3^0 = C_3 - \frac{3}{r} C_2 I + \frac{2}{r^2} I^3\in \ue(\ssl_r)\subset \ue(\gl_r)\,.
\end{equation*}

Finally, let us set
\begin{equation}\label{eqn:casimiro cubico sl}
    C^{(3)}=2C_3^0-rC^{(2)}
\end{equation}

%%%%%%%%%%%%%%%%%%%%%%%%%%%%%%%%%%%%%%
\subsection{Proof of Theorem~\ref{thm:casimiri}}\label{section:proof of Casimir}

The idea of the proof is to identify the coefficients $f^{(k)}_{\sS^\alpha V}(r)$ with the actions of certain Casimir elements. 

\begin{definition}\label{defn:phi k}
    Let $C^{(2)}$ and $C^{(3)}$ be the quadratic and cubic Casimir elements of $\ue(\ssl_r) \subset \ue(\gl_r)$ as defined in (\ref{eqn:casimiro quadratico in sl}) and (\ref{eqn:casimiro cubico sl}). 

    Define then
    \[ \phi^{(2)}_U(r) = \Tr_U \left(\frac{C^{(2)}}{(r-1)(r+1)}\right)\qquad\mbox{and}\qquad \phi^{(3)}_U(r)=\Tr_U\left(\frac{r\,C^{(3)}}{(r-2)(r-1)(r+1)(r+2)}\right). \]
\end{definition}

The main result of the section is the following proposition.

\begin{proposition}\label{prop:f = phi}
    For every $U\in\Rep^d_\Qset$ and $k=2,3$,
$$f^{(k)}_U(r)=\phi^{(k)}_{U}(r)\,.$$
\end{proposition}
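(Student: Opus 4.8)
The plan is to show that the two quantities $U\mapsto f^{(k)}_U(r)$ and $U\mapsto\phi^{(k)}_U(r)$ obey exactly the same formal rules on $\Rep_\Qset$ — additivity within each graded piece together with a common Leibniz-type rule under products — so that they are forced to coincide once they agree on a set of algebra generators. I would then carry out that comparison on the symmetric powers, where one side is explicit and the other is a Casimir eigenvalue.

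First I would record that $\phi^{(k)}$ satisfies the same identities as $f^{(k)}$. By Remark~\ref{rmk:ch_0=rk} we have $\dim U=\ch_0(U)$ for every $U\in\Rep_\Qset$. Since both $C^{(2)}$ and $C^{(3)}$ lie in $\ker(\epsilon)\cap\ue_3(\ssl_r)$, Lemma~\ref{lemma:additivity for phi} applies; dividing the two trace identities there by the representation-independent normalising constants of Definition~\ref{defn:phi k} yields
\[
\phi^{(k)}_{U_1+U_2}(r)=\phi^{(k)}_{U_1}(r)+\phi^{(k)}_{U_2}(r),\qquad \phi^{(k)}_{U_1\cdot U_2}(r)=\ch_0(U_2)\,\phi^{(k)}_{U_1}(r)+\ch_0(U_1)\,\phi^{(k)}_{U_2}(r).
\]
These match exactly the properties of $f^{(k)}$ established in Proposition~\ref{prop:properties of fU}. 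Hence the difference $g_U:=f^{(k)}_U(r)-\phi^{(k)}_U(r)$ is additive on each $\Rep^d_\Qset$ and obeys the Leibniz rule $g_{U_1\cdot U_2}=\ch_0(U_2)g_{U_1}+\ch_0(U_1)g_{U_2}$; in particular $g$ vanishes on a product as soon as it vanishes on both factors.

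Next I would reduce to generators. As recalled in Section~\ref{section:polynomial reps}, the symmetric powers $S^1V,\dots,S^rV$ generate $\Rep_\Qset$ as a $\Qset$-algebra, and the monomials $\Sym^\alpha V=S^{\alpha_1}V\cdots S^{\alpha_t}V$ with $|\alpha|=d$ span $\Rep^d_\Qset$. By the Leibniz rule above, $g_{S^mV}=0$ for every $m$ forces $g_{\Sym^\alpha V}=0$ for every such monomial, and then additivity on $\Rep^d_\Qset$ (equivalently $\Qset$-linearity) forces $g\equiv0$. Thus it suffices to prove $f^{(k)}_{S^mV}(r)=\phi^{(k)}_{S^mV}(r)$ for all $m$. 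The left-hand sides are the explicit expressions of Example~\ref{example:symmetric}. For the right-hand sides, $S^mV=\sS^{(m,0,\dots,0)}V$ is irreducible, so the central elements $C^{(2)},C^{(3)}$ act by their Casimir eigenvalues and $\phi^{(k)}_{S^mV}(r)=r_m\cdot(\text{eigenvalue})/(\text{normalising constant})$. Computing from the highest weight $\lambda=(m,0,\dots,0)$: the element $I$ acts by $|\lambda|=m$, and $C_2=\sum_{i,j}E_{ij}E_{ji}$ by $\sum_i\lambda_i(\lambda_i+r+1-2i)=m(m+r-1)$, so $C^{(2)}=C_2-I^2/r$ acts by $\tfrac{r-1}{r}\,m(m+r)$; this already gives $\phi^{(2)}_{S^mV}(r)=\frac{m(m+r)}{r+1}\frac{r_m}{r}$, matching \eqref{eqn:Delta 2 Sym}.

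The main obstacle is the cubic case: I must evaluate the Gelfand invariant $C_3=\sum_{i,j,k}E_{ij}E_{jk}E_{ki}$ on the highest-weight vector of $S^mV$, and from it $C_3^0$ and $C^{(3)}=2C_3^0-rC^{(2)}$. This is an involved but entirely routine highest-weight computation for $\lambda=(m,0,\dots,0)$; once the eigenvalue is in hand, multiplying by $r_m$ and by the normalising constant of Definition~\ref{defn:phi k} yields $\phi^{(3)}_{S^mV}(r)=\frac{m(m+r)(2m+r)}{(r+1)(r+2)}\frac{r_m}{r}$, which matches \eqref{eqn:Delta 3 Sym}. With both equalities verified on the generators $S^mV$, the reduction above concludes the proof. (The same eigenvalue computation carried out for a general highest weight $\alpha$ in place of $(m,0,\dots,0)$ is precisely what will subsequently produce the closed forms $\dot\delta_r^{(2)}(\alpha)$ and $\dot\delta_r^{(3)}(\alpha)$ of Theorem~\ref{thm:casimiri}.)
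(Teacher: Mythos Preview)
Your proof is correct and follows essentially the same route as the paper: both use the log-multiplicativity of $f^{(k)}$ and $\phi^{(k)}$ together with (weak) additivity on each $\Rep^d_\Qset$ to reduce to the generators $S^mV$, and then verify the equality there by computing the Casimir eigenvalues. The paper packages this last step as a separate lemma (Lemma~\ref{lemma:Phi=f on Sym}), where the cubic eigenvalue $C_3|_{S^mV}=m(m+r-1)^2$ is written out explicitly rather than deferred as you do.
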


Before the proof of Proposition~\ref{prop:f = phi}, we need the following remark. 

\begin{lemma}\label{lemma:Phi=f on Sym}
For $k=2,3$, we have the equalities 
\[ f^{(k)}_{S^m V}(r)=\phi^{(k)}_{S^m V}(r) \] 
for all $m \geq 0$.
\end{lemma}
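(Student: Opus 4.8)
The plan is to verify the equality $f^{(k)}_{S^m V}(r)=\phi^{(k)}_{S^m V}(r)$ for $k=2,3$ by computing both sides explicitly as functions of $m$ and $r$ and checking they agree. On the left, the quantities $f^{(2)}_{S^m V}(r)$ and $f^{(3)}_{S^m V}(r)$ are already given in closed form in Example~\ref{example:symmetric}, namely
\[
f^{(2)}_{S^m V}(r)=\frac{m(m+r)}{r+1}\frac{r_m}{r},\qquad f^{(3)}_{S^m V}(r)=\frac{m(m+r)(2m+r)}{(r+1)(r+2)}\frac{r_m}{r},
\]
with $r_m=\binom{m+r-1}{r-1}$. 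So the real task is to compute the right-hand side, i.e.\ the traces $\Tr_{S^m V} C^{(2)}$ and $\Tr_{S^m V} C^{(3)}$, and then divide by the normalising factors in Definition~\ref{defn:phi k}.

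First I would compute the trace of the quadratic Casimir. The cleanest route is to use the weight-space decomposition of $S^m V$: a basis is indexed by monomials $q^\lambda$ with $\lambda\in\Nset^r$, $|\lambda|=m$, each a simultaneous eigenvector for the diagonal operators $E_{ii}$ with eigenvalue $\lambda_i$. Since $C_2=\sum_{i,j}E_{ij}E_{ji}$, its trace splits as a diagonal contribution $\sum_i E_{ii}^2$ plus off-diagonal terms $\sum_{i\neq j}E_{ij}E_{ji}$; the latter act as raising/lowering operators, and one computes $\Tr_{S^m V}(E_{ij}E_{ji})$ by summing over the weight basis the coefficient by which $E_{ij}E_{ji}$ fixes each monomial. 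These are elementary combinatorial sums over compositions of $m$ into $r$ parts, and by the $\mathfrak{S}_r$-symmetry of $S^m V$ the relevant sums reduce to computing $\sum_\lambda \lambda_i^2$ and $\sum_\lambda \lambda_i\lambda_j$ over all $\lambda$ with $|\lambda|=m$, which are standard binomial identities. Then $\Tr_{S^m V}C^{(2)}=\Tr_{S^m V}C_2-\frac{1}{r}\Tr_{S^m V}I^2$, and since $I$ acts as the scalar $m$ on all of $S^m V$, the second term is simply $\frac{m^2}{r}r_m$.

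For the cubic case I would proceed identically with $C_3=\sum_{i,j,k}E_{ij}E_{jk}E_{ki}$, organising the triple sum by the pattern of coincidences among $i,j,k$, computing each trace contribution on the monomial weight basis, and assembling $\Tr_{S^m V}C^{(3)}=2\Tr_{S^m V}C_3^0-r\,\Tr_{S^m V}C^{(2)}$ via formulas (\ref{eqn:casimiro cubico sl}) and the definition of $C_3^0$. This reduces to power-sum-type combinatorial identities $\sum_\lambda \lambda_i^3$, $\sum_\lambda \lambda_i^2\lambda_j$, $\sum_\lambda\lambda_i\lambda_j\lambda_k$ over $|\lambda|=m$, again tractable by standard generating-function or binomial manipulations. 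Having the explicit traces, dividing by the normalisations gives $\phi^{(k)}_{S^m V}(r)$ as an explicit rational function of $m,r$, which I would then match against the $f^{(k)}_{S^m V}(r)$ above.

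The main obstacle I anticipate is purely computational bookkeeping rather than conceptual: the cubic trace involves several distinct index-coincidence cases and the weight-space sums for $C_3$ require care to track how $E_{ij}E_{jk}E_{ki}$ permutes monomials and returns them to themselves (only certain index patterns contribute to the trace). Getting the combinatorial coefficients right across all cases, and then algebraically simplifying the resulting expression to factor out $(r-2)(r-1)(r+1)(r+2)$ and reproduce the displayed $f^{(3)}$, is where the bulk of the effort lies. A useful shortcut to reduce risk of error would be to verify the final polynomial identities in $m$ and $r$ at enough sample values (or by a symbolic computation), since both sides are polynomial in $m$ of controlled degree for fixed $r$, so agreement on finitely many points forces the identity.
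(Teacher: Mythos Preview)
Your approach is correct but takes a substantially more laborious route than the paper. The paper exploits that $I$, $C_2$, $C_3$ lie in the centre of $\ue(\gl_r)$ and therefore act as \emph{scalars} on the irreducible representation $S^m V$ (Schur's lemma). Hence, rather than computing traces by summing over the full weight basis, one only needs to determine the eigenvalue on a single vector---most conveniently the highest weight vector of weight $(m,0,\dots,0)$---and then multiply by $r_m=\dim S^m V$. On the highest weight vector the computations are essentially one line each: $I$ acts by $m$, $C_2$ by $m(m+r-1)$, and $C_3$ by $m(m+r-1)^2$; the eigenvalues of $C^{(2)}$, $C_3^0$ and $C^{(3)}$ then follow by direct substitution into their defining formulas. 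This completely bypasses the combinatorial sums $\sum_\lambda \lambda_i^2$, $\sum_\lambda \lambda_i\lambda_j$, $\sum_\lambda \lambda_i^3$, etc.\ over compositions with $|\lambda|=m$, and the case analysis on index coincidences in $C_3$, which you correctly identify as the main computational burden of your approach. Your method would of course reproduce the same answers (the trace of a scalar operator is the scalar times the dimension), but the paper's route eliminates the bookkeeping entirely.
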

\begin{proof}
    Recall that the $C^{(k)}$'s are constructed starting from the standard Casimir elements $I$, $C_2$ and $C_3$. One can explicitly compute the actions of the latter on the symmetric representation $S^m V$:
    \[ I|_{S^m V} = m \cdot \id, \qquad C_2|_{S^m V} = m (m+r-1) \cdot \id \qquad\mbox{and}\qquad C_3|_{S^m V} = m (m+r-1)^2\cdot \id\, . \]
    By definition it then follows that $C^{(2)}=C_2 - \frac{1}{r} I^2$ acts as
    \[ C^{(2)}|_{S^m V}=m (m+r)\frac{r-1}{r}\cdot\id \]
    and hence 
    \[ \phi^{(2)}_{S^m V}(r)=\frac{m (m+r)}{r+1}\frac{r_m}{r}=f^{(2)}_{S^m V}(r), \]
    where $r_m=\dim S^m V$ and the last equality is Example~\ref{example:symmetric}. 

    Continuing with the degree 3 case, recall that $C_3^0 = C_3 - \frac{3}{r} C_2 I + \frac{2}{r^2} I^3$, so that its action on $S^m V$ is
    \[ C_3^0|_{S^m V} = m (m+r)\frac{(r-1)}{r^2}[(r-2)m + r(r-1)]\cdot \id \]
    and for $C^{(3)}=2C_3^0-rC^{(2)}$ we get
    \[ C^{(3)}|_{S^m V} = \frac{m (m+r) (2m+r) (r-1) (r-2)}{r^2}\cdot \id. \]
    Finally, 
    \[ \phi^{(3)}_{S^m V}(r)=\frac{m (m+r) (2m+r)}{(r+1) (r+2)}\frac{r_m}{r}= f^{(3)}_{S^m V}(r), \]
    where again the last equality follows from Example~\ref{example:symmetric}.
\end{proof}

\proof[Proof of Proposition~\ref{prop:f = phi}]
By Lemma~\ref{lemma:additivity for phi} we know that, for $k=2,3$, $\phi^{(k)}_U(r)$ is additive and log-multiplicative. Similarly, by Proposition~\ref{prop:properties of fU}, $f^{(k)}_U(r)$ is weak additive and log-multiplicative.
Moreover, by Lemma~\ref{lemma:Phi=f on Sym} they agree on symmetric representations $S^m V$.

It follows that $\phi^{(k)}_U(r)$ and $f^{(k)}_U(r)$ also agree when $U=\Sym^\alpha V=S^{\alpha_1}V\cdots S^{\alpha_t}V$ is a monomial of symmetric representation, i.e.\
\[  f^{(k)}_{\Sym^\alpha V}(r)=\phi^{(k)}_{\Sym^\alpha V}(r). \]
Finally, it is enough to recall that the vector space $\Rep^d_\Qset$ is $\Qset$-linearly generated by the monomials $\Sym^\alpha V$ with $|\alpha|=d$ to conclude the proof.
\endproof

\begin{remark}
One important observation to make is the following: the maps $\phi^{(k)}$ are fully additive, whereas the operators $\dd_k$ are only weakly additive. The fact that $\dd_k(U)=\phi^{(k)}_U(r)\dd_2(V)$ seems to hint at the fact that they coincide on the whole $\Rep_\Qset$. However, this is false: as we already pointed out (cf.\ Example~\ref{example:counterexample to additivity}) the operators $\dd_k$ are not fully additive. 
\end{remark}
\medskip

Now, let us denote by $\mathsf{Z}(\ssl_r)$ the centre of $\ue(\ssl_r)$. By the Harish-Chandra Theorem, there is an isomorphism (see \cite[isomorphism~(4.15)]{Molev})
\[ \mathcal{H}\colon \mathsf{Z}(\gl_r)\stackrel{\sim}{\longrightarrow}\K[x_1,\dots,x_r]^{\mathfrak{S}_r} \]
where the variable $x_i$ corresponds to the central element $E_{ii}-i+1$.

\begin{lemma}\label{lemma:H action}
If $q(x)\in\K[x_1,\dots,x_r]^{\mathfrak{S}_r}$ is a symmetric polynomial, then for every Schur module we have
\begin{equation*} 
\mathcal{H}^{-1}(q(x))|_{\sS^\alpha V}=\dot{q}(\alpha)\cdot\id_{\sS^\alpha V}, 
\end{equation*}
where $\dot{q}(x)=q(x_1,x_2+1,\dots,x_r+r-1)$.
\end{lemma}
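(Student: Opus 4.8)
The plan is to understand the statement as a concrete computation of how a central element, identified via the Harish-Chandra isomorphism with a symmetric polynomial $q(x)$, acts on each Schur module $\sS^\alpha V$. Since $\sS^\alpha V$ is an irreducible $\gl_r$-representation and $\mathcal{H}^{-1}(q(x))$ is central, Schur's Lemma guarantees it acts as a scalar $\lambda_\alpha \cdot \id$. The content of the lemma is thus the identification of this scalar as $\dot q(\alpha)$, where the shifted polynomial $\dot q$ evaluates $q$ at the highest weight of $\sS^\alpha V$ corrected by the Weyl vector shift $(0,1,\dots,r-1)$.

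First I would recall the precise formulation of the Harish-Chandra isomorphism as stated just above the lemma: the variable $x_i$ corresponds to the central element $E_{ii}-i+1$. The key is that on a highest weight vector $v_\alpha \in \sS^\alpha V$ of weight $\alpha=(\alpha_1,\dots,\alpha_r)$, the diagonal generator $E_{ii}$ acts by the scalar $\alpha_i$. The crucial feature of the Harish-Chandra map is that it is built so that a central element, after subtracting lower-triangular (raising-operator) contributions that annihilate the highest weight vector, is expressed as a polynomial in the shifted Cartan generators $E_{ii}-i+1$; evaluating the corresponding symmetric polynomial $q$ on a highest weight vector therefore only sees the diagonal action. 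So I would compute the action of $\mathcal{H}^{-1}(q(x))$ on $v_\alpha$ by substituting $x_i \mapsto \alpha_i - i + 1$ into $q$, which gives precisely $q(\alpha_1, \alpha_2-1,\dots,\alpha_r-r+1)$.

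The final step is to match this with the stated answer $\dot q(\alpha) = q(\alpha_1, \alpha_2+1,\dots,\alpha_r+r-1)$. These differ by the sign convention in the shift, so I would be careful to track whether the intended partition indexing in this paper places the shift as $x_i \mapsto \alpha_i - i + 1$ or as $x_i \mapsto \alpha_i + i - 1$; the definition $\dot q(x) = q(x_1, x_2+1, \dots, x_r + r - 1)$ tells me the authors are using the convention where evaluating $q$ on the raw partition and then applying the $+(i-1)$ shift reproduces the eigenvalue. Since $q$ is symmetric, once I have established that the eigenvalue is $q$ evaluated at the shifted weight, the symmetric-polynomial identity $\mathcal{H}^{-1}(q)|_{\sS^\alpha V} = \dot q(\alpha)\cdot \id$ follows by reading off the definition of $\dot q$.

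The main obstacle I expect is bookkeeping the shift convention correctly, rather than any deep difficulty: the entire content is that central elements act by their Harish-Chandra image evaluated on the highest weight shifted by the Weyl vector $\rho$. I would either cite the standard Harish-Chandra eigenvalue formula directly (as in \cite{Molev}, consistent with the isomorphism (4.15) already invoked) so that the statement reduces to unwinding definitions, or, if a self-contained argument is wanted, verify it by acting with $\mathcal{H}^{-1}(q)$ on the highest weight vector and using that raising operators in any monomial expression of the central element contribute nothing on $v_\alpha$. Either way, the proof is short: apply Schur's Lemma to get a scalar, then identify that scalar via the highest-weight evaluation built into $\mathcal{H}$, matching it to $\dot q(\alpha)$.
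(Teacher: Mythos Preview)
Your approach is exactly the paper's: invoke Schur's Lemma to reduce to a scalar, then read off that scalar from the very definition of the Harish--Chandra isomorphism $\mathcal{H}$. The paper's proof is literally those two steps and nothing more; your additional unpacking of the highest-weight computation and your caution about the sign of the $\rho$-shift go beyond what the authors write (and the concern is legitimate---tracking that sign against the convention ``$x_i \leftrightarrow E_{ii}-i+1$'' and the later substitution $\alpha_i = x_i + i - 1$ is the only real content here).
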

\begin{proof}
    Since $\mathcal{H}^{-1}(q(x))$ is central in $\ue(\ssl_r)$, it acts as a scalar multiple of the identity on any Schur module. The fact that the action is as described follows directly from the definition of the Harish--Chandra isomorphism.
\end{proof}

Let us then define the following symmetric polynomials: 
\begin{equation}\label{eqn:delta 2 dot}
    \delta_r^{(2)}(x)=(r-1)\sum_{i=1}^{r} x_i^2 -2\sum_{1\leq i<j\leq r} x_ix_j - \frac{r^2(r^2-1)}{12}
\end{equation}
and 
\begin{equation}\label{eqn:delta 3 dot}
    \delta_r^{(3)}(x)=2(r-2)(r-1)\sum_{i=1}^{r}x_i^3-6(r-2)\sum_{1\leq i\neq j\leq r} x_i^2x_j + 24\sum_{1\leq i<j<k\leq r}x_ix_jx_k\,.
\end{equation}

\begin{lemma}\label{lemma: deltas dot}
    After the change of variables $\alpha_i=x_i+i-1$, we have that 
    \begin{equation*}
        \dot\delta_r^{(2)}(\alpha)=(r-1)\sum_{i=1}^r\alpha_i^2-2\sum_{1\leq i<j\leq r}\alpha_i\alpha_j+r \sum_{i=1}^r(r+1-2i)\alpha_i\, .
    \end{equation*}
    and 
    \begin{align*}
        \dot\delta_r^{(3)}(\alpha)= & 
        2(r-2)(r-1)\sum_{i=1}^r\alpha_i^3-6(r-2)\sum_{1\leq i\neq j\leq r}\alpha_i^2\alpha_j+24\sum_{1\leq i<j<k\leq r}\alpha_i\alpha_j\alpha_k \\ 
        + & 3r(r-2) \sum_{i=1}^r (r+1-2i)\alpha_i^2 -12r\sum_{i<j}(r+1-i-j)\alpha_i\alpha_j \\
        + & r^2\sum_{i=1}^r (6i^2-6i(r+1)+r^2+3r+2)\alpha_i\, . 
    \end{align*}
\end{lemma}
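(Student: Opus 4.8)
The plan is to prove Lemma~\ref{lemma: deltas dot} by a single direct substitution. The change of variables $\alpha_i = x_i + i - 1$ reads $x_i = \alpha_i - (i-1)$, and I would simply plug this into the explicit polynomials $\delta_r^{(2)}(x)$ and $\delta_r^{(3)}(x)$ of (\ref{eqn:delta 2 dot}) and (\ref{eqn:delta 3 dot}), expand, and regroup the result by degree in the new variables $\alpha$. Since $x_i \mapsto \alpha_i - (i-1)$ is just a translation, the top-degree homogeneous part of each $\delta_r^{(k)}$ is unaffected: the terms $(r-1)\sum_i\alpha_i^2 - 2\sum_{i<j}\alpha_i\alpha_j$ and $2(r-2)(r-1)\sum_i\alpha_i^3 - 6(r-2)\sum_{i\neq j}\alpha_i^2\alpha_j + 24\sum_{i<j<k}\alpha_i\alpha_j\alpha_k$ reappear verbatim, and all the work lies in the lower-degree corrections produced by the cross terms.

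For $\dot\delta_r^{(2)}$ I would expand $(r-1)\sum_i(\alpha_i-(i-1))^2 - 2\sum_{i<j}(\alpha_i-(i-1))(\alpha_j-(j-1))$ and collect the linear part using the reindexing identity $\sum_{i<j}(c_j\alpha_i + c_i\alpha_j) = \sum_i\alpha_i(B - c_i)$, where $c_i = i-1$ and $B = \sum_i c_i = \binom{r}{2}$; a short computation then condenses the coefficient of $\alpha_i$ to $r(r+1-2i)$. The one delicate point is the constant term: the shift generates the pure number $(r-1)\sum_i(i-1)^2 - 2\sum_{i<j}(i-1)(j-1)$, which one checks equals $\frac{r^2(r^2-1)}{12}$, and this is exactly cancelled by the constant $-\frac{r^2(r^2-1)}{12}$ built into the definition of $\delta_r^{(2)}$. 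This is precisely why that constant was chosen, and it leaves $\dot\delta_r^{(2)}(\alpha)$ with no constant term.

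For $\dot\delta_r^{(3)}$ the strategy is identical but heavier. I would substitute $x_i = \alpha_i - (i-1)$ into each of $\sum_i x_i^3$, $\sum_{i\neq j}x_i^2 x_j$ and $\sum_{i<j<k}x_ix_jx_k$ and sort the output by degree in $\alpha$. For the degree-$2$ and degree-$1$ parts the explicit dependence of the coefficients on the indices $i,j$ must be kept, so the main labour is reindexing the double and triple sums (writing $\sum_{i\neq j}f(i)g(j) = \sum_i f(i)\big(\sum_j g(j) - g(i)\big)$ and the analogous collapse of the triple sum) until the coefficient of $\alpha_i^2$ becomes $3r(r-2)(r+1-2i)$, that of $\alpha_i\alpha_j$ becomes $-12r(r+1-i-j)$, and that of $\alpha_i$ becomes $r^2(6i^2-6i(r+1)+r^2+3r+2)$. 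Finally I would check that the pure constant vanishes, i.e.\ that $\delta_r^{(3)}(0,-1,\dots,-(r-1))=0$: writing the three summands through the power sums $S_m = \sum_i(-(i-1))^m$ collapses $\delta_r^{(3)}$ at this point to $2r^2 S_3 - 6r S_1 S_2 + 4S_1^3$, which factors as $\frac{(r-1)^2 r^3}{2}\big(-r + (2r-1) - (r-1)\big) = 0$. This explains why, unlike the quadratic case, no constant correction had to be added to $\delta_r^{(3)}$.

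The main obstacle is purely organizational rather than conceptual: keeping the degree-$2$ cross terms of the cubic honest, so that the mixed coefficient emerges as the symmetric expression $-12r(r+1-i-j)$ and the linear coefficient condenses to the quadratic-in-$i$ polynomial $6i^2-6i(r+1)+r^2+3r+2$. The cleanest way to control this is to treat the shifts $c_i = i-1$ as formal constants throughout, reduce each index sum only at the very end, and use the power sums $\sum_i(i-1)^m$ for $m\le 3$ to settle the aggregate (degree-$0$) contributions.
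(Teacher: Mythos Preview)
Your proposal is correct and is exactly the approach the paper takes: the paper's entire proof is the one-line remark ``This is a direct substitution of variables.'' Your outline simply fills in the details of that substitution, including the verification that the constant terms cancel, so there is nothing to add or compare.
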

\begin{proof}
    This is a direct substitution of variables.
\end{proof}

Notice that $\dot\delta_r^{(2)}$ and $\dot\delta_r^{(3)}$ coincide with the polynomials defined in Theorem~\ref{thm:casimiri}.

\begin{lemma}\label{lemma:i delta vanno bene}
    For $k=2,3$, let $\delta_r^{(k)}$ be as above. Then $\mathcal{H}^{-1}(\delta_r^{(k)})\in\ker(\epsilon)\cap\ue(\ssl_r)$.
\end{lemma}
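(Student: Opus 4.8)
The statement bundles two independent facts about the central element $\mathcal{H}^{-1}(\delta_r^{(k)})\in\mathsf{Z}(\gl_r)$: that the counit annihilates it, and that it actually lies in the subalgebra $\ue(\ssl_r)$. My plan is to reduce each to a transparent property of the symmetric polynomial $\delta_r^{(k)}$, exploiting the eigenvalue description of Lemma~\ref{lemma:H action}. As a preliminary simplification I would rewrite the two polynomials in power sums $p_j=\sum_i x_i^j$: a short computation gives
\[ \delta_r^{(2)}=r\,p_2-p_1^2-\tfrac{r^2(r^2-1)}{12}=\sum_{i<j}(x_i-x_j)^2-\tfrac{r^2(r^2-1)}{12}, \qquad \delta_r^{(3)}=2r^2 p_3-6r\,p_1 p_2+4p_1^3. \]
Both required properties become almost immediate in these coordinates.

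For the counit, I would use that every $C\in\ue(\gl_r)$ acts on the trivial one-dimensional representation $\sS^{(0,\dots,0)}V$ by the scalar $\epsilon(C)$, while Lemma~\ref{lemma:H action} evaluates this same scalar, for $C=\mathcal{H}^{-1}(\delta_r^{(k)})$, as $\dot\delta_r^{(k)}(0,\dots,0)=\delta_r^{(k)}(0,1,2,\dots,r-1)$. Thus $\epsilon\big(\mathcal{H}^{-1}(\delta_r^{(k)})\big)=\delta_r^{(k)}(0,1,\dots,r-1)$, and the condition $\mathcal{H}^{-1}(\delta_r^{(k)})\in\ker(\epsilon)$ is equivalent to the vanishing of $\delta_r^{(k)}$ at the point $(0,1,\dots,r-1)$. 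Substituting the values $p_1=\binom{r}{2}$, $p_2=\tfrac{(r-1)r(2r-1)}{6}$, $p_3=\tfrac{r^2(r-1)^2}{4}$ of the power sums at this point into the formulas above, both expressions collapse to $0$; in fact the constant $-\tfrac{r^2(r^2-1)}{12}$ in $\delta_r^{(2)}$ was calibrated precisely to make this happen.

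For membership in $\ue(\ssl_r)$, the crucial input is the criterion that $\mathcal{H}$ identifies $\mathsf{Z}(\ssl_r)$ with the ring $R$ of symmetric polynomials invariant under the diagonal shift $x_i\mapsto x_i+c$. One inclusion is representation-theoretic: since $\ssl_r$ acts trivially on $\det V=\W^r V$, any element of $\mathsf{Z}(\ssl_r)$ acts by the same scalar on $\sS^\alpha V$ and on $\sS^{\alpha+(n,\dots,n)}V=\sS^\alpha V\otimes(\det V)^{\otimes n}$, so its eigenvalue $\dot q(\alpha)$, and hence its image $q$, is shift-invariant. For the reverse inclusion I would use $\mathsf{Z}(\gl_r)=\mathsf{Z}(\ssl_r)[I]$ together with $\mathcal{H}(I)=p_1-\binom{r}{2}$ and the fact that $\K[x]^{\mathfrak{S}_r}=R[p_1]$ is a genuine polynomial ring in $p_1$ over $R$: writing a central element with shift-invariant image as $\sum_m s_m\big(p_1-\binom{r}{2}\big)^m$ with $s_m\in\mathcal{H}(\mathsf{Z}(\ssl_r))\subseteq R$ and comparing the term linear in $t$ after the substitution $x\mapsto x+t$ forces $s_m=0$ for $m\geq1$. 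Granting the criterion, it remains only to note that $\delta_r^{(2)}$ and $\delta_r^{(3)}$ are shift-invariant, which is visible from the power-sum forms: under $x_i\mapsto x_i+c$ the increments of $r\,p_2-p_1^2$ and of $2r^2 p_3-6r\,p_1 p_2+4p_1^3$ cancel identically.

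I expect the reverse inclusion of the shift-invariance criterion to be the one delicate point, since it is what guarantees that a shift-invariant Harish--Chandra image comes from $\ue(\ssl_r)$ rather than merely from $\mathsf{Z}(\gl_r)$. A cleaner alternative, which avoids this structural step entirely, is to identify $\mathcal{H}^{-1}(\delta_r^{(k)})$ directly with the explicit $\ssl_r$-Casimirs of (\ref{eqn:casimiro quadratico in sl}) and (\ref{eqn:casimiro cubico sl}): comparing scalar actions on every Schur module (using Lemma~\ref{lemma:H action} and the standard highest-weight eigenvalues of $C_2$ and $C_3$) yields $\mathcal{H}^{-1}(\delta_r^{(2)})=r\,C^{(2)}$ and $\mathcal{H}^{-1}(\delta_r^{(3)})=r^2\,C^{(3)}$. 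As $C^{(2)},C^{(3)}\in\ue(\ssl_r)$ by construction and, being sums of products of elements of $\ssl_r$, automatically lie in the augmentation ideal $\ker(\epsilon)$, this identification would prove both assertions simultaneously; its cost is the explicit computation of the eigenvalues of $C^{(2)}$ and $C^{(3)}$ on a general $\sS^\alpha V$.
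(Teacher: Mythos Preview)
Your argument is correct and follows the same two-step strategy as the paper: evaluate at the trivial representation to handle $\ker(\epsilon)$ (the paper does this by noting via Lemma~\ref{lemma: deltas dot} that $\dot\delta_r^{(k)}$ has no constant term, you by a direct power-sum computation of $\delta_r^{(k)}(0,1,\dots,r-1)$), and check shift-invariance of $\delta_r^{(k)}$ for membership in $\ue(\ssl_r)$. You are more scrupulous than the paper at the second step: the paper simply rewrites the shift-invariant $\delta_r^{(k)}$ as a polynomial $\xi_r^{(k)}$ in the successive differences $x_i-x_{i+1}$ and asserts $\mathcal{H}^{-1}(\delta_r^{(k)})=\xi_r^{(k)}(E_{11}-E_{22},\dots,E_{r-1,r-1}-E_{rr})\in\mathsf{Z}(\ssl_r)$, leaving the compatibility of the Harish--Chandra maps for $\gl_r$ and $\ssl_r$ implicit, whereas you actually argue the criterion $\mathcal{H}(\mathsf{Z}(\ssl_r))=\{\text{shift-invariant symmetric polynomials}\}$. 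Your alternative route, identifying $\mathcal{H}^{-1}(\delta_r^{(k)})$ directly with $r\,C^{(2)}$ and $r^2\,C^{(3)}$, does not appear in the paper's proof of this lemma; it would work, and it effectively anticipates the eigenvalue comparison the paper carries out afterward in Proposition~\ref{prop:ultima}.
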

\begin{proof}
    First of all, let us notice that $\mathcal{H}^{-1}(\delta_r^{(k)}(x))=\delta_r^{(k)}(E_{11},E_{22}+1,\dots,E_{rr}+r-1)$. By Lemma~\ref{lemma: deltas dot}, this polynomial has no degree $0$ term, so that by definition it belongs to $\ker(\epsilon)$.

    Let us now prove that $\mathcal{H}^{-1}(\delta_r^{(k)})\in\mathsf{Z}(\ssl_r)$.
    We first claim that for any constant $a$, the polynomial $\delta_r^{(k)}(x)$ is invariant under translation by $a$, i.e.\ 
    \[ \delta_r^{(k)}(x_1-a,\dots,x_r-a)=\delta_r^{(k)}(x_1,\dots,x_r). \] 
    This follows from an easy computation. 
    
    If we now put $a=\frac{1}{r}(x_1+\cdots x_r)$, then we find that $\delta_r^{(k)}(x)$ is in fact a polynomial in the variables $x_i-x_{i+1}$, i.e.\ there exists $\xi^{(k)}_r(y)\in\K[y_1,\dots,y_{r-1}]$ such that $\delta_r^{(k)}(x)=\xi^{(k)}_r(x_1-x_2,\dots,x_{r-1}-x_r)$. 
    Therefore $\mathcal{H}^{-1}(\delta_r^{(k)}(x))=\xi^{(k)}_r(E_{11}-E_{22},\dots,E_{r-1 r-1}-E_{rr})\in\mathsf{Z}(\ssl_r)$, thus concluding the proof.
\end{proof}

The following result will complete the proof of Theorem~\ref{thm:casimiri}.

\begin{proposition}\label{prop:ultima}
    Let $\phi^{(k)}$ be the characters defined in Definition~\ref{defn:phi k}, and let $\delta_r^{(k)}$ be the quadratic and cubic symmetric polynomials defined in (\ref{eqn:delta 2 dot}) and (\ref{eqn:delta 3 dot}). Then for every Schur module $\sS^\alpha V$ we have
    \[ \phi^{(2)}_{\sS^\alpha V}(r)=\frac{\dot\delta_r^{(2)}(\alpha)}{(r-1)(r+1)}\frac{r_\alpha}{r}\qquad\mbox{and}\qquad \phi^{(3)}_{\sS^\alpha V}(r)=\frac{\dot\delta_r^{(3)}(\alpha)}{(r-2)(r-1)(r+1)(r+2)}\frac{r_\alpha}{r}. \]
\end{proposition}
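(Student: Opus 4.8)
The plan is to realise the normalised Casimir operators as the central elements that Harish--Chandra attaches to the polynomials $\delta_r^{(2)}$ and $\delta_r^{(3)}$. Set $Z^{(k)}:=\mathcal{H}^{-1}(\delta_r^{(k)})$ for $k=2,3$. By Lemma~\ref{lemma:H action} combined with Lemma~\ref{lemma: deltas dot}, each $Z^{(k)}$ is central and acts on the Schur module $\sS^\alpha V$ by the scalar $\dot\delta_r^{(k)}(\alpha)$, so that $\Tr_{\sS^\alpha V}Z^{(k)}=\dot\delta_r^{(k)}(\alpha)\,r_\alpha$. On the other hand $C^{(2)}$ and $C^{(3)}$ are themselves central, so they act by scalars on $\sS^\alpha V$ and $\Tr_{\sS^\alpha V}C^{(k)}$ equals that scalar times $r_\alpha$. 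Comparing with Definition~\ref{defn:phi k}, both formulas in the statement become equivalent to the single trace identity
\[
\Tr_U Z^{(k)}=r^{\,k-1}\,\Tr_U C^{(k)}\qquad(k=2,3);
\]
substituting it into the definitions of $\phi^{(2)}$ and $\phi^{(3)}$ yields exactly the asserted expressions, the various powers of $r$ combining to leave the factor $r_\alpha/r$ in each case.

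I would therefore prove this identity for every $U\in\Rep_\Qset$. Both of its sides are trace functionals of elements of $\ker(\epsilon)\cap\ue_3(\ssl_r)$: for $C^{(2)},C^{(3)}$ this is clear from their defining formulas, which place them in $\ue(\ssl_r)$, give them no constant term, and keep their PBW-degree at most $3$; for $Z^{(2)},Z^{(3)}$ it is exactly the content of Lemma~\ref{lemma:i delta vanno bene}, their degrees being $2$ and $3$. Hence Lemma~\ref{lemma:additivity for phi} applies and shows that both $U\mapsto\Tr_U Z^{(k)}$ and $U\mapsto r^{k-1}\Tr_U C^{(k)}$ are additive and log-multiplicative on $\Rep_\Qset$. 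Such a functional is completely determined by its values on a set of $\Qset$-algebra generators, because log-multiplicativity expresses its value on a monomial $\Sym^\alpha V$ through the values on the factors and their dimensions, and additivity then extends this to all of $\Rep_\Qset$. Since the symmetric powers $S^m V$ generate $\Rep_\Qset$ as a $\Qset$-algebra, it suffices to check the identity on each $S^m V$, in the same spirit as the proof of Proposition~\ref{prop:f = phi}.

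The remaining task is the evaluation on symmetric powers. From Lemma~\ref{lemma:Phi=f on Sym} one has $r\,C^{(2)}|_{S^m V}=(r-1)m(m+r)$ and $r^2\,C^{(3)}|_{S^m V}=(r-1)(r-2)m(m+r)(2m+r)$. On the other side, putting $\alpha=(m,0,\dots,0)$ into the explicit polynomials of Lemma~\ref{lemma: deltas dot} collapses every multi-index sum and gives $\dot\delta_r^{(2)}(m,0,\dots,0)=(r-1)m^2+r(r-1)m=(r-1)m(m+r)$ and $\dot\delta_r^{(3)}(m,0,\dots,0)=(r-1)(r-2)\bigl(2m^3+3rm^2+r^2m\bigr)=(r-1)(r-2)m(m+r)(2m+r)$. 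These agree with the previous values, so the identity holds on all generators, hence on all of $\Rep_\Qset$, and in particular on every Schur module.

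I expect the only genuine computation to be this last evaluation, and within it the cubic case; everything else is bookkeeping. The conceptual point that keeps the argument short is that the additivity and log-multiplicativity furnished by Lemma~\ref{lemma:additivity for phi} reduce the whole verification to symmetric powers, so that one never has to compute the eigenvalue of the cubic Casimir on a general Schur module. The single place that requires care is verifying the hypotheses of Lemma~\ref{lemma:additivity for phi} for the $Z^{(k)}$ — namely that $\mathcal{H}^{-1}(\delta_r^{(k)})$ really lands in $\ue(\ssl_r)\cap\ker(\epsilon)$ rather than merely in $\ue(\gl_r)$ — which is precisely why Lemma~\ref{lemma:i delta vanno bene} is invoked.
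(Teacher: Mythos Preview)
Your proof is correct and follows essentially the same route as the paper's: verify the claim on symmetric powers using the eigenvalue computations from the proof of Lemma~\ref{lemma:Phi=f on Sym} together with the explicit evaluation of $\dot\delta_r^{(k)}$ at $(m,0,\dots,0)$, and then extend to all of $\Rep_\Qset$ via additivity and log-multiplicativity (Lemma~\ref{lemma:additivity for phi}, with Lemma~\ref{lemma:i delta vanno bene} ensuring the hypotheses hold for $Z^{(k)}$). Your reformulation as the trace identity $\Tr_U Z^{(k)}=r^{k-1}\Tr_U C^{(k)}$ and the explicit collapse of the polynomials at $\alpha=(m,0,\dots,0)$ make the check more transparent than the paper's terse ``direct check'', but the logical structure is identical.
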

\begin{proof}
    First of all, by the proof of Lemma~\ref{lemma:Phi=f on Sym} and by a direct check using Lemma~\ref{lemma: deltas dot}, it follows that both equalities hold for symmetric representations $S^m V$.

    Now, by Lemma~\ref{lemma:additivity for phi} (which can be applied because of Lemma~\ref{lemma:i delta vanno bene}), both $\phi^{(k)}_{\sS^\alpha V}(r)$ and the functions $\frac{\dot\delta_2(\alpha)}{(r-1)(r+1)}\frac{r_\alpha}{r}$ and $\frac{\dot\delta_3(\alpha)}{(r-2)(r-1)(r+1)(r+2)}\frac{r_\alpha}{r}$ are additive and log-multiplicative. Therefore they agree on $\Rep_\Qset$ if and only if they agree on a set of generators. Since symmetric representations generate $\Rep_\Qset$, the claim follows.
\end{proof}

\proof[Proof of Theorem~\ref{thm:casimiri}]
The proof follows at once from Proposition~\ref{prop:f = phi} and Proposition~\ref{prop:ultima}.
\endproof

\begin{remark}
    Let us consider the central element $I=\sum E_{ii}\in\ue(\gl_r)$ of degree $1$. It is not an element in $\ue(\ssl_r)$, but by Remark~\ref{rmk:se primitivo di grado 1 anche gl va bene} the proof of Lemma~\ref{lemma:additivity for phi} continues to work for it. Therefore if we define 
    \[ \phi_U^{(1)}(r)=\Tr_U\left( \frac{1}{r} I \right) ,\]
    then $\phi^{(1)}(r)$ is additive and log-multiplicative. Explicitly we have 
    \[ \phi_{\sS^\alpha V}^{(1)}(r)=|\alpha|\frac{r_\alpha}{r}. \]
    In this way we understand Proposition~\ref{prop:slope} as a Casimir action. In fact, it is this observation that led us to think at the characters $f^{(k)}(r)$ as Casimir actions for $k=2$ and $3$.
\end{remark}

\begin{remark}
    The function 
     \[ f^{(2)}_{S^m V}(r)\cdot\left(\frac{r_m}{r}\right)=\frac{m(m+r)}{(r+1)}\left(\frac{r_m}{r}\right)^2 \] 
     counts the dimension of the representation $\sS^{m-1, m-1}V_{r+2}$, as it can be directly seen from Weyl's formula. Here $V_{r+2}$ is a vector space of dimension $r+2$. 
     
     Equivalently, it counts the number of standard tableaux of size $(m-1,m-1)$ for $\textrm{Mat}(2,r+2)$. These form a basis for the degree $m-1$ component of the homogeneous coordinate ring of the Grassmannian $\Gr(2,r+2)$ (see \cite[Lemma~8.14, Remark~8.16]{Mukai}), i.e.
    \[
    f(m)=h^0(\Gr(2,r+2), \mO(m-1)).    
    \]

    We point out that $f^{(2)}_{S^m V}(r)$ has another interesting Lie-theoretical connection when $r=4$, being the dimension of $V^{(k)}$, the distinguished module in the Severi variety with $a=4$ in \cite[Theorem~7.3]{lm06}. We have not explored a connection between these two facts yet.
    
    On the other hand, for $f^{(3)}_{S^m V}(r)$ we haven't found an interpretation valid for every $r$. However, for $r=3$, we can see that  $\frac{10}{3}\cdot f^{(3)}_{S^m V}(3)$ is the dimension of the Cartan power $\mathfrak{g}^{(m)}$ for $\mathfrak{g}=\mathfrak{so}_8$, see \cite[Theorem~7.1]{lm06}. 
    
    We do not have an explanation for such phenomenon, but a natural question is to understand if the polynomials appearing in Theorem~\ref{thm:casimiri} admit in general a geometrical interpretation.
\end{remark}

%%%%%%%%%%%%%%%%%%%%%%%%%%%%%%%%%%%%%%
%%%%%%%%%%%%%%%%%%%%%%%%%%%%%%%%%%%%%%
\section{Geometric applications}

%%%%%%%%%%%%%%%%%%%%%%%%%%%%%%%%%%%%%%
\subsection{Schur functors of locally free sheaves}

In this section $X$ is a compact complex manifold (or a proper smooth algebraic variety over $\K$).

Let $E$ be a vector bundle of rank $r$ on $X$. Define the following characteristic classes:
\begin{align*}
    \Delta_1(E)= & \cc_1(E) \\
    \Delta_2(E)= & \cc_1(E)^2-2r\ch_2(E) \\
    \Delta_3(E)= & \cc_1(E)^3-3r\cc_1(E)\ch_2(E)+3r^2\ch_3(E).
\end{align*}

\begin{remark}
    These classes are the same as defined in (\ref{eqn:Delta 1 2 3}). We refer to \cite{DrezetLog,Ful20} for other appearances of logarithmic Chern classes.
\end{remark}

For any partition $\alpha=(\alpha_1,\dots,\alpha_r)$, we denote by $\sS^\alpha E$ the \emph{Schur bundle} associated to $E$. It is the vector bundle of rank $r_\alpha$ such that 
\[ (\sS^\alpha E)_x=\sS^\alpha E_x\qquad\forall x\in X. \]

As a consequence of the results in the previous section, we get the following statement.

\begin{thm}\label{thm:fibrati}
    Let $X$ and $E$ be as above. Then 
    \begin{align*}
        \Delta_1(\sS^\alpha E) = & \, |\alpha|\frac{r_\alpha}{r} \Delta_1(E) \\
        \Delta_2(\sS^\alpha E) = & \, \frac{\dot\delta_r^{(2)}(\alpha)}{(r-1) (r+1)} \left(\frac{r_\alpha}{r}\right)^2 \Delta_2(E) \\
        \Delta_3(\sS^\alpha E) = & \, \frac{\dot\delta_r^{(3)}(\alpha)}{(r-2) (r-1) (r+1) (r+2)} \left(\frac{r_\alpha}{r}\right)^3 \Delta_2(E)\,,
    \end{align*}
    where $\dot\delta_r^{(2)}(\alpha)$ and $\dot\delta_r^{(3)}(\alpha)$ are as in Theorem~\ref{thm:casimiri}.
\end{thm}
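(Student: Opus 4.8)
The plan is to deduce this geometric statement from its algebraic counterpart via the splitting principle, so that no new computation is required beyond Theorem~\ref{thm:Delta 2 e 3} and Theorem~\ref{thm:casimiri}. By the splitting principle (valid both in the compact K\"ahler and in the proper smooth algebraic setting), one may formally assume $E = L_1 \oplus \cdots \oplus L_r$ with Chern roots $a_i = \cc_1(L_i)$, so that $\ch(E) = \sum_i e^{a_i}$. Under this identification the Chern character of the Schur bundle $\sS^\alpha E$ is obtained by substituting the $a_i$ into $\ch(\sS^\alpha V)$ as defined in Definition~\ref{defn:ch for rep}: indeed $\Char(\sS^\alpha V) = s_\alpha(q_1,\dots,q_r)$, and the evaluation $q_i \mapsto e^{a_i}$ reproduces precisely the splitting-principle computation of $\ch(\sS^\alpha E)$. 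The crucial observation is then that the defining polynomial expressions for $\Delta_k(E)$ (using the rank $r = \ch_0(E)$) coincide verbatim with those defining $\Delta_k(U)$ for a representation $U$ with $\ch_0(U) = r$, as displayed in (\ref{eqn:Delta 1 2 3}). Applying this to $U = \sS^\alpha V$, whose rank is $\ch_0(\sS^\alpha V) = r_\alpha$, I obtain the identity of cohomology classes
\[ \Delta_k(\sS^\alpha E) = \Delta_k(\sS^\alpha V)\big|_{a_i}, \qquad \Delta_k(E) = \Delta_k(V)\big|_{a_i}, \]
so that every polynomial identity in the Chern roots proved in $\Rep_\Qset$ transfers directly to an identity of characteristic classes on $X$.

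It then remains to read off the coefficients. For $k = 1$ the statement is immediate from Proposition~\ref{prop:slope}: since $\sS^\alpha V \in \Rep^{|\alpha|}_\Qset$, one has $\ch_1(\sS^\alpha V) = |\alpha|\,\tfrac{r_\alpha}{r}\,\ch_1(V)$, which is exactly $\Delta_1(\sS^\alpha E) = |\alpha|\,\tfrac{r_\alpha}{r}\,\Delta_1(E)$. For $k = 2, 3$ I would pass through the normalised classes $\dd_k$ of Definition~\ref{def:dk}. Writing $\dd_k(U) = \Delta_k(U)/(k\,\ch_0(U)^{k-1})$ and combining Theorem~\ref{thm:Delta 2 e 3} with Theorem~\ref{thm:casimiri}, I get
\[ \frac{\Delta_k(\sS^\alpha V)}{k\, r_\alpha^{k-1}} = f^{(k)}_{\sS^\alpha V}(r)\,\frac{\Delta_k(V)}{k\, r^{k-1}}, \qquad f^{(k)}_{\sS^\alpha V}(r) = \frac{\dot\delta_r^{(k)}(\alpha)}{D_k}\,\frac{r_\alpha}{r}, \]
where $D_2 = (r-1)(r+1)$ and $D_3 = (r-2)(r-1)(r+1)(r+2)$. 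Solving for $\Delta_k(\sS^\alpha V)$ produces the factor $(r_\alpha/r)^{k-1}$ from the rank ratio and one further factor $r_\alpha/r$ from $f^{(k)}$, assembling into the claimed $(r_\alpha/r)^k$. Transferring the resulting polynomial identity back along the Chern-root substitution yields the formulas for $\Delta_k(\sS^\alpha E)$.

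I do not expect a genuine obstacle, since the representation-theoretic heart of the argument lies in the earlier theorems and the remaining work is bookkeeping. The one point to handle cleanly is the distinction between the two ranks $r = \ch_0(E)$ and $r_\alpha = \ch_0(\sS^\alpha E)$, which enter the definitions of $\Delta_k(E)$ and $\Delta_k(\sS^\alpha E)$ differently and are precisely what generate the $(r_\alpha/r)^k$ scaling. A secondary routine check is the legitimacy of invoking the splitting principle uniformly in both geometric settings allowed by the hypotheses, for which one uses the standard flag-bundle construction together with the injectivity of pullback on cohomology.
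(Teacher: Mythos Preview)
Your proposal is correct and follows essentially the same route as the paper: reduce via the splitting principle to the representation-theoretic identities, then invoke Proposition~\ref{prop:slope}, Theorem~\ref{thm:Delta 2 e 3} and Theorem~\ref{thm:casimiri}. If anything, you are more explicit than the paper in tracking how the factor $(r_\alpha/r)^k$ assembles from the $\dd_k$-normalisation and the $r_\alpha/r$ in $f^{(k)}_{\sS^\alpha V}(r)$.
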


\begin{proof}
    Let us assume that $E=L_1\oplus\cdots\oplus L_r$, where the $L_i$'s are line bundle with $\cc(L_i)=a_i$, i.e.\ the $a_i$'s are the Chern roots of $E$. Then by definition
    \[ \ch(E)=\sum_{i=1}^{r}e^{a_i}, \]
    which is the same expression as in Definition~\ref{defn:ch for rep}.
    In particular every expression involving the Chern character of $E$ and $\sS^\alpha E$ is equivalent to the corresponding expression for the vector spaces $E_x$ and $\sS^\alpha E_x$, for any point $x\in X$.

    The claim then follows at once from Proposition~\ref{prop:slope}, Theorem~\ref{thm:Delta 2 e 3} and Theorem~\ref{thm:casimiri}.

    If $E$ is any vector bundle, then by the splitting principle we can formally reduce to the case above, thus concluding the proof.
\end{proof}

\begin{remark}
    The case $\Delta_1=\cc_1$ was already proved in \cite{Rub13}. Rubei's proof uses a double-induction argument to reduce to the symmetric case, where the result is easy to prove. Also our proof is deduced from the symmetric case, but our reduction argument is not inductive and is instead based on representation-theoretic arguments. 
\end{remark}

\begin{remark}
    The expressions in Theorem~\ref{thm:fibrati} can be used to deduce closed expressions for the first three Chern characters of a Schur functor, see Section~\ref{appendix:Schur}. Similar expressions have been investigated in \cite[Section~4]{Isopoussu}. More precisely, in \cite[Theorem~4.3]{Isopoussu} there is an asymptotic version of the expression for $\ch_2(\sS^\alpha E)$ under some (strong) assumptions on the partition $\alpha$.
\end{remark}

\begin{remark}
    It is interesting to notice that for any partition $\alpha$, the polynomial $\dot\delta_r^{(2)}(\alpha)$ is non-negative. It follows that if $X$ is a K\"ahler manifold of dimension $N$, then 
    \[ \Delta_2(\sS^\alpha E).\omega^{N-2}\geq0\qquad\iff\qquad \Delta_2(E).\omega^{N-2}\geq0. \] 
    Here $\omega$ is a K\"ahler class on $X$. 
\end{remark}

The comment above should be read with an eye on the Bogomolov inequality, which says that if a vector bundle $E$ is slope $\omega$-semistable on a compact K\"ahler manifold, then $\Delta_2(E).\omega^{N-2}\geq0$. The vice versa is not true, but it is known that we can still deduce polystability of $\sS^\alpha E$ from that of $E$. We state here the following known result for lack of a better reference.

\begin{proposition}\label{prop:poly implica poly}
    Let $X$ be a compact K\"ahler manifold, $\omega$ a K\"ahler class and $E$ a slope $\omega$-polystable vector bundle on $X$. Then for any partition $\alpha$ the Schur functor $\sS^\alpha E$ is slope $\omega$-polystable.
\end{proposition}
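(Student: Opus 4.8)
The plan is to reduce slope polystability to the existence of a Hermite--Einstein metric and then transport such a metric through the Schur construction. The essential input is the Kobayashi--Hitchin correspondence (the theorem of Donaldson and Uhlenbeck--Yau): on a compact K\"ahler manifold $(X,\omega)$, a holomorphic vector bundle is slope $\omega$-polystable if and only if it carries an $\omega$-Hermite--Einstein metric, i.e.\ a Hermitian metric $h$ whose Chern connection has mean curvature $\Lambda_\omega F_h = c\cdot\mathrm{id}$ for a real constant $c$ proportional to the slope $\mu_\omega(E)$.

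First I would fix, by polystability of $E$, a Hermite--Einstein metric $h$ with Einstein factor $c$. This $h$ induces Hermitian metrics on each tensor power $E^{\otimes d}$, and a direct curvature computation---using that the curvature of a tensor product acts as $F\otimes 1 + 1\otimes F$ on the two factors---shows the induced metric on $E^{\otimes d}$ is again Hermite--Einstein with factor $d\,c$. This is the metric counterpart of the log-multiplicativity of the slope recorded in~(\ref{eqn:slope additive}).

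Next I would exploit that $\sS^\alpha E$ is a $\GL(E)$-equivariant direct summand of $E^{\otimes d}$ with $d=|\alpha|$. Because the decomposition of $V^{\otimes d}$ into Schur modules is invariant already under the unitary group $U(r)\subset\GL_r$, the inclusion $\sS^\alpha E\hookrightarrow E^{\otimes d}$ is preserved by the induced Chern connection; that is, $\sS^\alpha E$ is a parallel orthogonal subbundle. A parallel orthogonal summand of a Hermite--Einstein bundle is itself Hermite--Einstein with the same factor $d\,c$, so the restricted metric is an $\omega$-Hermite--Einstein metric on $\sS^\alpha E$. Invoking the Kobayashi--Hitchin correspondence in the reverse direction then gives that $\sS^\alpha E$ is slope $\omega$-polystable.

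The point requiring care is that the Schur summand is genuinely \emph{parallel}, not merely a holomorphic subbundle: this uses that the projector $E^{\otimes d}\to\sS^\alpha E$ is assembled from the symmetric-group action, which commutes with the structure group, hence is flat for the induced connection. Granting this, the conclusion is immediate and, notably, does not require $\sS^\alpha E$ to be stable---in agreement with Remark~\ref{remark:not stable}, where stability is in general lost.
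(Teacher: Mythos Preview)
Your argument is correct and follows essentially the same route as the paper: both use the Donaldson--Uhlenbeck--Yau/Kobayashi--Hitchin correspondence together with the fact that $\sS^\alpha E$ sits as a direct summand inside $E^{\otimes |\alpha|}$. The paper compresses the metric details you spell out (Hermite--Einstein on the tensor power, parallel orthogonal summand) into the single sentence that $E^{\otimes|\alpha|}$ is polystable by DUY and hence so is any direct summand; your version simply unpacks the mechanism behind that inference.
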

\begin{proof}
    If we put $\ell=|\alpha|$, then $\sS^\alpha E\subset E^{\otimes\ell}$ as a direct summand. It follows from the Donaldson--Uhlenbeck--Yau Theorem (see for example \cite{LT95}) that $E^{\otimes\ell}$ is slope $\omega$-polystable, so that $\sS^\alpha E$ is also $\omega$-polystable.
\end{proof}

\begin{remark}\label{remark:not stable}
    In Proposition~\ref{prop:poly implica poly}, even if $E$ is slope stable, the polystability of $\sS^\alpha E$ cannot in general be improved. As an example, consider $E= S^2 Q$, where $Q$ is the quotient tautological bundle on a grassmannian $\operatorname{Gr}(k,n)$. Then $E$ is slope stable, but $S^2 E=S^2(S^2 Q))$ is not indecomposable, hence it is strictly polystable. 
    On the other hand, indecomposability of $\sS^\alpha E$ is the only obstruction to stability: if $\sS^{\alpha} E$ is indecomposable, then it is stable.
\end{remark}

%%%%%%%%%%%%%%%%%%%%%%%%%%%%%%%%%%%%%%
\subsection{Modular bundles on hyper-K\"ahler manifolds}

Let $X$ be a compact hyper-K\"ahler manifold. Recall that the second integral cohomology group $\oH^2(X,\mathbb{Q})$ is torsion free and endowed with a non-degenerate quadratic form $\mathsf{q}_X$. With an abuse of notation, we keep denoting by $\mathsf{q}_X$ 
the class in $\oH^4(X,\mathbb{Q})$ dual to this quadratic form.

We denote by $\operatorname{SH}(X)\subset\oH(X,\mathbb{Q})$ the Verbitsky component, namely the submodule generated by $\oH^2(X,\mathbb{Q})$ via cup product. Notice that $\mathsf{q}_X\in\operatorname{SH}^4(X)$.

\begin{definition}[\protect{\cite[Definition~1.1]{O'Grady:Modular}}]\label{defn:modular}
    A vector bundle $E$ on $X$ is called \emph{modular} if the projection of $\Delta_2(E)$ onto $\operatorname{SH}^4(X)$ is a multiple of $\mathsf{q}_X$.
\end{definition}

As a corollary of Theorem~\ref{thm:fibrati} we obtain the following result that allows to construct new modular bundles on hyper-K\"ahler manifolds from old ones.

\begin{thm}\label{thm:modular}
    Let $X$ be a compact hyper-K\"ahler manifold. If $E$ is a slope polystable modular vector bundle, then $\sS^\alpha E$ is a slope polystable modular vector bundle.
\end{thm}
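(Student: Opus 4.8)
The plan is to deduce Theorem~\ref{thm:modular} as a direct corollary of the two main ingredients already established: Theorem~\ref{thm:fibrati}, which controls $\Delta_2(\sS^\alpha E)$, and Proposition~\ref{prop:poly implica poly}, which controls polystability. First I would treat the polystability claim, which is immediate: since $E$ is slope $\omega$-polystable (with respect to a K\"ahler class $\omega$ coming from the hyper-K\"ahler structure), Proposition~\ref{prop:poly implica poly} yields at once that $\sS^\alpha E$ is slope $\omega$-polystable. So the only real content is the modularity of $\sS^\alpha E$.

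For modularity, the key observation is that Theorem~\ref{thm:fibrati} gives the \emph{scalar} identity
\[
\Delta_2(\sS^\alpha E) = \frac{\dot\delta_r^{(2)}(\alpha)}{(r-1)(r+1)}\left(\frac{r_\alpha}{r}\right)^2 \Delta_2(E)
\]
in $\oH^4(X,\Qset)$. In other words, $\Delta_2(\sS^\alpha E)$ is a rational multiple of $\Delta_2(E)$. I would then use that the projection onto the Verbitsky component $\operatorname{SH}^4(X)$ is a $\Qset$-linear map. Denoting this projection by $\pi$, linearity gives
\[
\pi\bigl(\Delta_2(\sS^\alpha E)\bigr) = \frac{\dot\delta_r^{(2)}(\alpha)}{(r-1)(r+1)}\left(\frac{r_\alpha}{r}\right)^2 \pi\bigl(\Delta_2(E)\bigr).
\]
By the modularity hypothesis on $E$ (Definition~\ref{defn:modular}), $\pi(\Delta_2(E))$ is a multiple of $\mathsf{q}_X$; multiplying by the scalar coefficient keeps it a multiple of $\mathsf{q}_X$. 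Hence $\pi(\Delta_2(\sS^\alpha E))$ is a multiple of $\mathsf{q}_X$, which is exactly the statement that $\sS^\alpha E$ is modular.

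I do not anticipate a genuine obstacle here, since the heavy lifting is done by Theorem~\ref{thm:fibrati} and Proposition~\ref{prop:poly implica poly}. The only point requiring a word of care is the linearity of the projection $\pi\colon \oH^4(X,\Qset)\to\operatorname{SH}^4(X)$: one should note that $\operatorname{SH}^4(X)$ is a $\Qset$-subspace and the projection is $\Qset$-linear, so that pulling out the rational scalar $\tfrac{\dot\delta_r^{(2)}(\alpha)}{(r-1)(r+1)}(\tfrac{r_\alpha}{r})^2$ is legitimate. A minor edge case worth flagging is when this scalar vanishes (e.g.\ for certain partitions, or small $r$): then $\Delta_2(\sS^\alpha E)=0$, whose projection is trivially a multiple of $\mathsf{q}_X$, so modularity still holds. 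With these remarks the proof is essentially a two-line invocation of the earlier results.
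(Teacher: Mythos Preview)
Your proposal is correct and follows exactly the paper's own argument: polystability via Proposition~\ref{prop:poly implica poly}, and modularity via the scalar relation $\Delta_2(\sS^\alpha E)=c\,\Delta_2(E)$ from Theorem~\ref{thm:fibrati} together with linearity of the projection onto $\operatorname{SH}^4(X)$. Your added remarks on the linearity of $\pi$ and the vanishing-scalar edge case are harmless elaborations of what the paper leaves implicit.
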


\begin{proof}
   The slope polystability follows from Proposition~\ref{prop:poly implica poly}.  The modularity follows from Theorem~\ref{thm:fibrati}: in fact the projection into $\operatorname{SH}^4(X)$ of $\Delta_2(\sS^\alpha E)$ is a multiple of the projection of $\Delta_2(E)$, which is a multiple of $\mathsf{q}_X$ by hypothesis.
\end{proof}

\begin{example}
    Let $S$ be a K3 surface, i.e.\ a hyper-K\"ahler surface, and let $E$ be a stable vector bundle with Mukai vector $v(E)=(r,\cc,s)$, where $r$ is the rank of $E$. By dimension reasons, the bundle $E$ is modular.

    For the same dimension reasons, any Schur functor $\sS^\alpha E$ is also modular. The Mukai vector of $\sS^\alpha E$ is
    \[ v(\sS^\alpha E)=\left( r_\alpha , |\alpha|\frac{r_\alpha}{r}\cc , \frac{(|\alpha|^2-\tilde{\delta}_r^{(2)}(\alpha))}{r}\frac{r_\alpha}{r}\frac{\cc^2}{2}+\tilde{\delta}_r^{(2)}(\alpha)(s-r)\frac{r_\alpha}{r}+r_\alpha \right), \]
    where we have used the same notations as in Section~\ref{appendix:Schur}, so that $\tilde{\delta}_r^{(2)}(\alpha)=\frac{\dot\delta_r^{(2)}(\alpha)}{(r-1)(r+1)}$.
    
    The Mukai vector $v(\sS^\alpha E)\in\oH^*(S,\Zset)$ can be primitive or not depending on $\alpha$, $r$ and the bundle $E$ itself. For example, let us suppose that $\operatorname{Pic}(S)=\Zset\cdot H$, where $H$ is ample and $H^2=2d\geq 6$. Then there exists a $H$-stable vector bundle $E$ of rank $2$ with $\cc_1(E)=H$ and $\ch_2(E)=0$. Such a vector bundle has a primitive Mukai vector $v(E)=(2,H,2)$. The symmetric power $S^2 E$ has Mukai vector
    \[ v(S^2 E)=(3,3H,d+3), \]
    therefore it is primitive for all $d\nmid3$, but it is not primitive for $d\in 3\Zset$.
\end{example}

\begin{example}
    Examples of stable modular vector bundles on hyper-K\"ahler fourfolds of type $\operatorname{K3}^{[2]}$ have been constructed in \cite{O'Grady:Modular with many moduli,Bottini,Fa24,FO24,FT25}. By Theorem~\ref{thm:modular}, each of these examples gives rise to many more (polystable) examples.
\end{example}

%%%%%%%%%%%%%%%%%%%%%%%%%%%%%%%%%%%%
%%%%%%%%%%%%%%%%%%%%%%%%%%%%%%%%%%%%
\appendix

%%%%%%%%%%%%%%%%%%%%%%%%%%%%%%%%%%%
%%%%%%%%%%%%%%%%%%%%%%%%%%%%%%%%%%%
\section{Discriminants for low-rank bundles}\label{appendix:low ranks}

Let us consider the logarithmic classes defined in (\ref{eqn:Delta 1 2 3}). As already remarked in the introduction, these logarithmic classes $\Delta_k(F)$ for a coherent sheaf $F$ are linked to the the stability of $F$ itself. This is well known for $k=1,2$, and there are evidences starting to build up for $k=3$, see \cite{Ful20}.
It is therefore interesting to study not only the first logarithmic classes, but also the higher-degree terms.  
As a first observation, notice that \emph{all} logarithmic classes behave well with respect to twist with line bundles: for example (see \cite[Proposition 2.1]{DrezetLog}) we have for $E$ a vector bundle and $L$ a line bundle
\[
\Delta_k(E \otimes L)= \Delta_k(E), \ k \geq 2
\]

The first three logarithmic classes satisfies another nice property.
\begin{proposition}
    Let $E$ be a locally free sheaf of rank $ r< k$, for $k=1,2,3$. Then $\Delta_k(E)=0$.
\end{proposition}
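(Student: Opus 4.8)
The plan is to reduce everything to Chern roots via the splitting principle and then read off $\Delta_k$ from the logarithmic Chern character. By the splitting principle we may assume $E=L_1\oplus\cdots\oplus L_r$ with Chern roots $a_1,\dots,a_r$, so that $\ch(E)=\sum_{i=1}^r e^{a_i}$, $\ch_0(E)=r$, and $\ch_j(E)=\tfrac{1}{j!}\sum_{i=1}^r a_i^j$. Comparing with Definition~\ref{defn:Log chern}, and using that each $\Delta_j(E)$ is homogeneous of degree $j$ while $\ch_0(E)=r$ is constant, the degree-$k$ homogeneous component of $\Log_+(E)=\log\!\big(\tfrac{1}{r}\sum_i e^{a_i}\big)$ is exactly $(-1)^{k+1}\tfrac{\Delta_k(E)}{k\,r^{k}}$. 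Since this constant is nonzero, it suffices to prove that this degree-$k$ component vanishes whenever $r<k$. (The cases $r=0$ are trivial, as then $E=0$.)

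For $r=1$ this is immediate: $\Log_+(L)=\log(e^{a_1})=a_1$ is homogeneous of degree $1$, so every component of degree $\ge 2$ vanishes, giving $\Delta_k(L)=0$ for all $k\ge2$; in particular this settles $(k,r)=(2,1)$ and $(3,1)$. The only remaining case is $k=3$, $r=2$. Here I would center the Chern roots using twist-invariance: the identity $\Delta_3(E\otimes L)=\Delta_3(E)$ recalled above says that the symmetric polynomial $\Delta_3(a_1,a_2)$ is invariant under the simultaneous shift $a_i\mapsto a_i+t$, where $t=\cc_1(L)$. Reading this as a polynomial identity in a free variable $t$ and specializing to $t=-\tfrac{1}{2}(a_1+a_2)$ replaces the roots by the centered pair $b,-b$ with $b=\tfrac12(a_1-a_2)$. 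Then $\Log_+=\log\!\big(\tfrac12(e^{b}+e^{-b})\big)=\log\cosh b$ is an \emph{even} function of $b$, so its degree-$3$ component is zero; hence $\Delta_3(E)=0$.

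The only delicate point is this centering step, namely treating $t=\cc_1(L)$ as a formal parameter and specializing it to $-\tfrac12(a_1+a_2)$. This is legitimate because twist-invariance holds for every line bundle on every $X$, so the shift-invariance of $\Delta_3(a_1,a_2)$ is a genuine polynomial identity in $t$. If one prefers to bypass the parity argument, the whole proposition collapses to a finite and entirely routine check: substitute $\ch_0(E)=r$ and $\ch_j(E)=\tfrac{1}{j!}\sum_i a_i^j$ into the formulas \eqref{eqn:Delta 1 2 3} for $\Delta_2$ and $\Delta_3$, and verify that the resulting symmetric polynomial in $a_1$ (for $r=1$) and in $a_1,a_2$ (for $r=2$) vanishes identically. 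I expect no genuine obstacle beyond this bookkeeping; the conceptual content is entirely captured by the evenness of $\log\cosh$ and the degree-$1$-ness of $\Log_+$ for a line bundle.
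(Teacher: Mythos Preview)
Your proof is correct but takes a different route from the paper's. The paper converts $\Delta_2$ and $\Delta_3$ from Chern characters to Chern classes, obtaining
\[
\Delta_2(E)=-(r-1)\cc_1^2+2r\cc_2,\qquad
\Delta_3(E)=\tfrac12\bigl((r-1)(r-2)\cc_1^3-3r(r-2)\cc_1\cc_2+3r^2\cc_3\bigr),
\]
and then simply observes that every monomial either carries a factor of $(r-1)$ or $(r-2)$, or involves a Chern class $\cc_i$ with $i>r$; in either case the term vanishes when $r<k$. Your argument instead stays at the level of Chern roots and exploits the structure of $\Log_+$ directly: for $r=1$ you use that $\Log_+(L)=a_1$ has no higher-degree part, and for $r=2$, $k=3$ you invoke twist invariance to center the roots and then the evenness of $\log\cosh$. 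The paper's approach has the advantage of making the factorisation in $(r-1),(r-2)$ visible, which is exactly what motivates the search for modified classes $\widetilde\Delta_k$ immediately afterwards; your parity argument, by contrast, yields for free that \emph{every} odd discriminant $\Delta_{2m+1}$ vanishes in rank~$2$, a bit more than what is needed here. Your justification of the centering step---reading twist invariance as a polynomial identity in $t$ and specialising---is sound, since the identity $\Delta_k(E\otimes L)=\Delta_k(E)$ quoted just before the proposition is itself established at the level of symmetric functions in the Chern roots.
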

\begin{proof}
    For $k=1$ the claim is trivially true. For $k=2, 3$ this is not evident from the definition. However, if we write $\Delta_k$ in terms of the Chern classes, we find that
\[     \Delta_2(E)=-(r-1)\cc_1^2(E)+2r\cc_2(E)     \] 
and
\[     \Delta_3(E)=\frac{1}{2} \left( (r-1)(r-2)\cc_1(E)^3-3 r (r-2)\cc_1(E)\cc_2(E)+3r^2\cc_3(E)\right).     \] The result then follows by simply noting that $\cc_i(E)=0$ for $i > r$.
\end{proof}

The above result does not hold for $k \geq 4$, highlighting another difference between lower and higher degree discriminants. In fact, $\Delta_4(E)$ is already not zero on rank 2 bundles with $\cc_2 \neq 0$. However, it is possible to find suitable modifications for both $\Delta_4$ and $\Delta_5$ that behave well with respect to this property. For this, let us expand the degree $k=4,5$ the expression in Definition \ref{defn:Log chern}:
\begin{align*}
    \Delta_4(E)= & \, \ch_1(E)^4 - 4 r \ch_1(E)^2 \ch_2(E) + 2 r^2 (\ch_2(E)^2 + 2 \ch_1(E) \ch_3(E)) - 4 r^3 \ch_4(E) \\
   \Delta_5(E)= & \, \ch_1(E)^5 - 5 r \ch_1(E)^3 \ch_2(E) + 5 r^2 \ch_1(E) (\ch_2(E)^2 + \ch_1(E) \ch_3(E)) \\
   - & \, 5 r^3 (\ch_2(E) \ch_3(E) + \ch_1(E) \ch_4(E)) + 5 r^4 \ch_5(E) r^4\,.
\end{align*}

We have the following result.
\begin{proposition}
    Let $E$ be a locally free sheaf of rank $r$ and let us consider the two modified classes
    \begin{align*}
    \widetilde \Delta_4(E)= & \, (r+1)\Delta_4(E)-\Delta_2(E)^2 \\
    \widetilde \Delta_5(E)= & \,(r+5)\Delta_5(E)-5\Delta_2(E)\Delta_3(E).
    \end{align*}
    Then $\widetilde\Delta_k(E)=0$ for $r<k$ and $k=4,5$.
\end{proposition}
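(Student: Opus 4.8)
The plan is to mirror the proof of the previous proposition: use the splitting principle to reduce to Chern roots $a_1,\dots,a_r$, so that each class becomes a symmetric polynomial of the prescribed degree, and then rewrite everything in terms of the Chern classes $\cc_1,\cc_2,\dots$, keeping in mind that $\cc_i(E)=0$ for $i>r$. The difference with the cases $k\le 3$ is that $\Delta_4$ and $\Delta_5$ individually do \emph{not} vanish for $r<k$ (indeed $\Delta_4$ is already nonzero in rank $2$), so the whole content of the statement is that the specific linear combinations defining $\widetilde\Delta_4$ and $\widetilde\Delta_5$ are engineered precisely to restore this vanishing.

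First I would expand $\widetilde\Delta_4=(r+1)\Delta_4-\Delta_2^2$ and $\widetilde\Delta_5=(r+5)\Delta_5-5\Delta_2\Delta_3$ into Chern classes. Writing $\ch_i=p_i/i!$ in terms of power sums $p_i$ and then applying Newton's identities to pass to the $\cc_i$ is routine; one expands the products $\Delta_2^2$ and $\Delta_2\Delta_3$ in the same way. The key structural claim, which I would isolate as the heart of the argument, is the following divisibility: in $\widetilde\Delta_k$ the coefficient of any Chern monomial $\cc_{\lambda_1}\cdots\cc_{\lambda_s}$ of weighted degree $k$ whose largest index is $m$ is divisible by $\prod_{j=m}^{k-1}(r-j)$. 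For instance, for $\widetilde\Delta_4$ the coefficient of $\cc_1^4$ works out to $-\tfrac16\,r(r-1)(r-2)(r-3)$ and that of $\cc_1\cc_3$ to $-\tfrac23\,r^2(r+1)(r-3)$; for $\widetilde\Delta_5$ the coefficient of $\cc_1^5$ is $\tfrac1{24}\,r(r-1)(r-2)(r-3)(r-4)$, that of $\cc_1^2\cc_3$ is $\tfrac{5}{24}\,r^2(r+2)(r-3)(r-4)$, and that of $\cc_1\cc_4$ is $-\tfrac{5}{24}\,r^3(r+5)(r-4)$, each exhibiting exactly the predicted factor $\prod_{j=m}^{k-1}(r-j)$.

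Granting this divisibility, the conclusion is immediate and parallels the $k\le 3$ case. Fix $r<k$ and a monomial whose largest index is $m$. If $m>r$ then $\cc_m(E)=0$, so the monomial vanishes on $E$. If instead $m\le r$, then since $r\le k-1$ we have $r\in\{m,m+1,\dots,k-1\}$, so the product $\prod_{j=m}^{k-1}(r-j)$ contains the vanishing factor $(r-r)$ and the coefficient itself is zero. In either case every monomial contributes $0$, whence $\widetilde\Delta_k(E)=0$. The main obstacle is purely computational: the degree-$5$ expansion is long, and one must confirm the divisibility pattern for \emph{every} monomial rather than just the leading one. What makes the argument work, and what I would emphasise, is that the coefficients $(r+1,-1)$ and $(r+5,-5)$ are the unique choices reinstating the top factor $(r-(k-1))$ that $\Delta_k$ alone lacks for $k\ge 4$; this is exactly the brute-force phenomenon already observed for $\Delta_{4,t}$ in Lemma~\ref{lemm:Delta 4 t for Sym}.
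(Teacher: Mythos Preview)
Your approach is essentially the same as the paper's: expand $\widetilde\Delta_4$ and $\widetilde\Delta_5$ in terms of the Chern classes and observe that every monomial either contains some $\cc_i$ with $i>r$ or has a coefficient divisible by a factor $(r-j)$ with $j\le k-1$ that kills it. The paper simply writes out the full expansions (your sample coefficients agree with theirs) and reads off the vanishing; your abstract divisibility claim $\prod_{j=m}^{k-1}(r-j)\mid\text{(coeff.)}$ is exactly the pattern visible in those expansions, so there is no real difference in method.
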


\begin{proof}
    Once again, it is enough to expand the two classes in term of the Chern classes. We have:
    \begin{align*}
  \widetilde \Delta_4= & \,  \frac{1}{6} r (-\cc_1^4 (-3 + r) (-2 + r) (-1 + r) + 
   4 \cc_1^2 \cc_2 (-3 + r) (-2 + r) r \\
   - & \, 2 \cc_2^2 (-3 + r) (-2 + r) r - 
   4 \cc_1 \cc_3 (-3 + r) r (1 + r) + 4 \cc_4 r^2 (1 + r))
   \end{align*}
   and
   \begin{align*}
    \widetilde \Delta_5= & \, \frac{1}{24} r (\cc_1^5 (-4 + r) (-3 + r) (-2 + r) (-1 + r) - 
   5 \cc_1^3 \cc_2 (-4 + r) (-3 + r) (-2 + r) r \\
   + & \, 5 \cc_1^2 \cc_3 (-4 + r) (-3 + r) r (2 + r) + 5 \cc_1 (-4 + r) r (\cc_2^2 (-3 + r) (-2 + r) - \cc_4 r (5 + r)) \\
   + & \, 5 r^2 (-\cc_2 \cc_3 (-4 + r) (-3 + r) + \cc_5 r (5 + r))).
    \end{align*}
    The result then follows.
\end{proof}

One can of course try to generalise this to higher discriminants, but we could not find any rigid pattern. It would be anyway very interesting to have an explicit expression for classes $\widetilde\Delta_k$, for any $k\geq1$, with the property that $\widetilde\Delta_k(E)=0$ for every locally free sheaf $E$ of rank $r<k$.

%%%%%%%%%%%%%%%%%%%%%%%%%%%%%%%%%%%
%%%%%%%%%%%%%%%%%%%%%%%%%%%%%%%%%%%
\section{Explicit formulas}\label{appendix}
In this appendix we collect some explicit formulas for the computation of Chern classes of Schur bundles. Sections~\ref{appendix:symmetric} and~\ref{appendix:exterior} follows from \cite{Dragutin}, while Section~\ref{appendix:Schur} is a consequence of Theorem~\ref{thm:fibrati}.
\medskip

From now on $E$ is a locally free sheaf of rank $r$, $\alpha=(\alpha_1,\dots,\alpha_r)$ a partition of size $|\alpha|=\alpha_1+\cdots+\alpha_r$ and $r_\alpha$ the rank of the Schur bundle $\sS^\alpha E$. 

Finally, for sake of simplicity, we use the shorthand
\[ \cc_1:=\cc_1(E)\qquad\mbox{and}\qquad\ch_i:=\ch_i(E). \]

%%%%%%%%%%%%%%%%%%%%%%%%%%%%%%%%%%
\subsection{The symmetric bundle}\label{appendix:symmetric}
The following is an expansion of \cite[Theorem~4.8]{Dragutin}, see also (\ref{eqn:ch of Sym}).
\begin{align*}
    \rk(S^m E) = & \,\, r_m:=\binom{m+r-1}{r-1} \\
    \cc_1(S^m E)= & \,\, m\frac{r_m}{r} \cc_1 \\
    \ch_2(S^m E) = & \,\, \frac{1}{2}\frac{(m-1) m}{r+1}\frac{r_m}{r}\cc_1^2+\frac{m (m+r)}{r+1}\frac{r_m}{r}\ch_2 \\
    \ch_3(S^m E) = & \,\, \frac{1}{6}\frac{(m-2) (m-1) m}{(r+1) (r+2)}\frac{r_m}{r}\cc_1^3+\frac{(m-1) m (m+1)}{(r+1) (r+2)}\frac{r_m}{r}\cc_1\ch_2+\frac{m (m+r) (2m+r)}{(r+1) (r+2)}\frac{r_m}{r}\ch_3 \\
    \Delta_2(S^m E) = & \,\, \frac{m (m+r)}{r+1}\left(\frac{r_m}{r}\right)^2\Delta_2(E) \\
    \Delta_3(S^m E) = & \,\, \frac{m (m+r) (2m+r)}{(r+1) (r+2)}\left(\frac{r_m}{r}\right)^3\Delta_3(E)
\end{align*}
\vspace{2cm}
%%%%%%%%%%%%%%%%%%%%%%%%%%%%%%%%%
\subsection{The exterior bundle}\label{appendix:exterior}

The following is an expansion of \cite[Theorem~4.2]{Dragutin}.

\begin{align*}
    \rk(\W^n E) = & \,\, r_n:=\binom{r}{n} \\
    \cc_1(\W^n E) = & \,\ n\frac{r_n}{r}\cc_1 \\
    \ch_2(\W^n E) = & \,\, \frac{1}{2}\frac{(n-1) n}{r-1}\frac{r_n}{r}\cc_1^2+ \frac{n (r-n)}{r-1}\frac{r_n}{r}\ch_2 \\
    \ch_3(\W^n E) = & \,\, \frac{1}{6}\frac{(n-2) (n-1) n}{(r-2) (r-1)}\frac{r_n}{r}\cc_1^3 + \frac{(n-1) n (r-n)}{(r-2) (r-1)}\frac{r_n}{r}\cc_1\ch_2+ \frac{n (2n^2-3rn+r^2)}{(r-2) (r-1)}\frac{r_n}{r} \ch_3 \\
    \Delta_2(\W^n E) = & \,\, \frac{n (r-n)}{r-1}\frac{r_n}{r}\Delta_2(E) \\
    \Delta_3(\W^n E) = & \,\, \frac{n (2n^2-3nr+r^2)}{(r-2) (r-1)}\frac{r_n}{r}\Delta_3(E)
\end{align*}

%%%%%%%%%%%%%%%%%%%%%%%%%%%%%%%%
\subsection{The Schur bundles}\label{appendix:Schur}

The functions $\dot\delta_2(\alpha,r)$ and $\dot\delta_3(\alpha,r)$ are the same as defined in Theorem~\ref{thm:casimiri}. For sake of readability, let us introduce this notation:
\[ \tilde{\delta}_r^{(2)}:=\frac{\dot\delta_r^{(2)}(\alpha)}{(r-1)(r+1)}\qquad\mbox{and}\qquad \tilde{\delta}_r^{(3)}:=\frac{\dot\delta_r^{(3)}(\alpha)}{(r-2)(r-1)(r+1)(r+2)}.  \]

\begin{align*}
    \rk(\sS^\alpha E) = & \,\, r_\alpha:=\prod_{1\leq i<j\leq r}\frac{\alpha_i-\alpha_j+j-i}{j-i} \\
    \cc_1(\sS^\alpha E) = & \,\, |\alpha|\frac{r_\alpha}{r}\cc_1 \\
    \ch_2(\sS^\alpha E) = & \,\, \frac{1}{2r}\left( |\alpha|^2-\tilde{\delta}_r^{(2)}\right)\frac{r_\alpha}{r}\cc_1^2 + \tilde{\delta}_r^{(2)}\frac{r_\alpha}{r}\ch_2 \\
    \ch_3(\sS^\alpha E) = & \,\, \frac{1}{6r^2}\left( |\alpha|^3-3|\alpha|\tilde{\delta}_r^{(2)}+2\tilde{\delta}_r^{(3)}\right)\frac{r_\alpha}{r}\cc_1^3+\frac{1}{r}\left(|\alpha|\tilde{\delta}_r^{(2)}-\tilde{\delta}_r^{(3)}\right)\frac{r_\alpha}{r}\cc_1\ch_2+\tilde{\delta}_r^{(3)}\frac{r_\alpha}{r}\ch_3 \\
    \Delta_2(\sS^\alpha E) = & \,\, \tilde{\delta}_r^{(2)}\left(\frac{r_\alpha}{r}\right)^2\Delta_2(E) \\
    \Delta_3(\sS^\alpha E) = & \,\, \tilde{\delta}_r^{(3)}\left(\frac{r_\alpha}{r}\right)^3\Delta_3(E)
\end{align*}

%%%%%%%%%%%%%%%%%%%%%%%%%%%%%%%%%%%%%%
%%%%%%%%%%%%%%%%%%%%%%%%%%%%%%%%%%%%%%
%%%%%%%%%%%%%%%%%%%%%%%%%%%%%%%%%%%%%%
\bibliographystyle{alpha}

\end{document}